\newcommand*{\MRref}[2]{ \href{http://www.ams.org/mathscinet-getitem?mr=#1}{MR #1}}
\newcommand*{\arxiv}[1]{\href{http://www.arxiv.org/abs/#1}{arXiv: #1}}
\numberwithin{equation}{section}
\theoremstyle{plain}
\newtheorem{theorem}[equation]{Theorem}
\newtheorem{lemma}[equation]{Lemma}
\newtheorem{proposition}[equation]{Proposition}
\theoremstyle{definition}
\newtheorem{notation}[equation]{Notation}
\theoremstyle{remark}
\newtheorem{remark}[equation]{Remark}
\DeclareMathOperator{\spn}{{span}}
\DeclareMathOperator{\id}{\mathrm{id}}
\newcommand*{\tig}{{\tilde{g}}}
\newcommand*{\nb}{\nobreakdash}
\newcommand*{\Star}{\(^*\)\nobreakdash-}
\newcommand*{\T}{\mathbb T}
\newcommand*{\C}{\mathbb C}
\newcommand*{\Lb}{\mathcal L}%adjointable operators on a Hilbert module
\renewcommand*{\L}{\mathcal L}
\newcommand*{\K}{\mathcal K}% Compact operators
\renewcommand*{\H}{\mathcal H}
\newcommand*{\cont}{C}%continuous functions
\newcommand*{\contc}{\cont_c}%continuous functions with compact support
\newcommand*{\M}{\mathcal M}%multiplier algebra
\newcommand*{\Ad}{\textup{Ad}}%conjugation by a unitary
\newcommand*{\U}{\mathcal U}% unitaries
\newcommand*{\E}{\mathcal E}% Hilbert module
\newcommand*{\X}{\mathcal X}
\newcommand{\rt}{\mathrm{rt}}
\newcommand{\Rt}{\mathrm{Rt}}
\newcommand*{\congto}{\xrightarrow\sim}
\newcommand*{\cstar}{\texorpdfstring{$C^*$\nobreakdash-\hspace{0pt}}{*-}}
\newcommand*{\into}{\hookrightarrow}
\newcommand*{\onto}{\twoheadrightarrow}
\renewcommand*{\max}{\mathrm{max}}
\newcommand*{\dual}[1]{\widehat{#1}}
\newcommand*{\dualG}{\widehat{G}}
\newcommand{\om}{\omega}
\newcommand{\car}{\curvearrowright}
\newcommand{\B}{\mathcal B}
\newcommand{\hatG}{\widehat{G}}
\newcommand{\hatdelta}{\widehat{\delta}}
\begin{document}
\title[Fischer's approach to deformation]{Fischer's approach to deformation of coactions}
%  and comparison with previous approaches

\author{Alcides Buss}
\email{alcides.buss@ufsc.br}
\address{Departamento de Matem\'atica\\
 Universidade Federal de Santa Catarina\\
 88.040-900 Florian\'opolis-SC\\
 Brazil}

\author{Siegfried Echterhoff}
\email{echters@uni-muenster.de}
\address{Mathematisches Institut\\
Universit\"at M\"un\-ster\\
 Einsteinstr.\ 62\\
 48149 M\"unster\\
 Germany}

\begin{abstract}
This paper explores a novel approach to the deformation of \cstar{}algebras via coactions of locally compact groups, emphasizing Fischer's construction in the context of maximal coactions. We establish a rigorous framework for understanding how deformations arise from group coactions, extending previous work by Bhowmick, Neshveyev, and Sangha. Using Landstad duality, we compare different deformation procedures, demonstrating their equivalence and efficiency in constructing twisted versions of given \cstar{}algebras. Our results provide deeper insights into the interplay between exotic crossed products, coaction duality, and operator algebra deformations, offering a unified perspective for further generalizations.
\end{abstract}

\subjclass[2010]{46L55, 22D35}

\keywords{Deformation, Borel cocycle, coactions, Fischer maximalization, Landstad duality, exotic crossed products}

\thanks{This work was funded by: the Deutsche Forschungsgemeinschaft (DFG, German Research Foundation) Project-ID 427320536 SFB 1442 and under Germany's Excellence Strategy EXC 2044  390685587, Mathematics M\"{u}nster: Dynamics, Geometry, Structure; and CNPq/CAPES/Humboldt and Fapesc - Brazil.}

\maketitle

%\tableofcontents

\section{introduction}\label{sec-intro}
Motivated by earlier results of several previous works (e.g., \cites{Rieffel:Deformation, Kasprzak1, BNS}), the authors introduced in \cite{BE:deformation} a general procedure for deformation of $C^*$-algebras via coactions of groups on $C^*$-algebras. 
A key ingredient for our deformation procedure is a version of Landstad duality for (possibly exotic) crossed products by coactions. 
Specifically, for a locally compact group $G$, we let $\rt:G\car C_0(G)$ denote the right translation action. A weak $G\rtimes G$ algebra is a $C^*$-algebra $B$ equipped with an action $\beta:G\car B$ 
and an $\rt-\beta$-equivariant nondegenerate $C^*$-homomorphism $\phi:C_0(G)\to \M(B)$. Assume that $\rtimes_\mu$ is an exotic crossed-product functor which admits dual coactions. 
Explicitly, we consider a functorial crossed-product construction $(B,G,\beta)\mapsto B\rtimes_{\beta,\mu}G$ such that $B\rtimes_{\beta,\mu}G$ is a $C^*$-completion of the usual convolution algebra $C_c(G,B)$
with a $C^*$-norm $\|\cdot\|_\mu$ satisfying $\|\cdot\|_r\leq \|\cdot\|_\mu\leq \|\cdot\|_{\max}$. Further, we assume that the dual coaction 
$$\widehat{\beta}_{\max}: B\rtimes_{\beta,\max}G\to \M(B\rtimes_{\beta,\max}G\otimes C^*(G))$$ 
factors through a coaction $\widehat{\beta}_\mu$ on $B\rtimes_{\beta,\mu}G$. 

The results of \cite{Buss-Echterhoff:Exotic_GFPA} on (exotic) generalized fixed-point algebras, then yield the construction of a unique (up to isomorphism) cosystem $(A_\mu,\delta_\mu)$ such that 
$$(B,G,\beta)\cong (A_\mu\rtimes_{\delta_\mu}\widehat{G}, G, \widehat{\delta_\mu})$$ 
and $(A_\mu,\delta_\mu)$ satisfies $\mu$-Katayama duality in the sense that 
$$B\rtimes_{\beta,\mu}G\cong A_\mu\rtimes_{\delta_\mu}\widehat{G}\rtimes_{\widehat{\delta_\mu},\mu}G\cong A_\mu\otimes \K(L^2(G)).$$
Starting from any coaction $(A,\delta)$ of $G$, the crossed product 
$B:=A\rtimes_\delta G$, together with the dual action $\beta:=\widehat{\delta}$ and the canonical inclusion $\phi:=j_{C_0(G)}:C_0(G)\to \M(A\rtimes_\delta G)$, forms a weak $G\rtimes G$-algebra. We recover $(A,\delta)$ via the above procedure if and only if $(A,\delta)$ satisfies $\mu$-Katayama duality. 

Applying the procedure to the maximal crossed-product functor $\rtimes_{\max}$ yields the {\em maximalization} $(A_{\max}, \delta_{\max})$ of $(A,\delta)$.  Using the reduced crossed-product functor $\rtimes_r$ provides the {\em normalization} (or reduction) of $(A,\delta)$. We refer to \cite{BE:deformation} for a concise survey of these constructions and facts.

Furthermore, considering a central extension $\sigma=(\T\stackrel{\iota}\into  G_\sigma \stackrel{q}\onto G)$, referred to as a {\em twist} for $G$, the authors constructed in \cite{BE:deformation} a 
{\em $\sigma$-deformed} weak $G\rtimes G$-algebra $(B^\sigma, \beta^\sigma, \phi^\sigma)$ out of the given triple $(B, \beta, \sigma)$. Landstad duality then produces a deformed cosystem $(A^\sigma_\mu, \delta^\sigma_\mu)$ associated with $(A_\mu,\delta_\mu)$. Our motivation was to extend -- and somehow simplify -- the construction due to Bhowmick, Neshveyev, and Sangha \cite{BNS}, initially formulated for deformation of coactions by group cocycles $\om\in Z^2(G,\T)$, which was restricted to normal coactions and reduced crossed products.

The relationship between group twists $\sigma$ and 
Borel group cocycles $\om \in Z^2(G,\T)$ is governed by the well-understood classification of central extensions of $G$ by $\T$ through the second Borel cohomology group $H^2(G,\T)$. In \cite{BE:deformation} the authors claimed, without proof, that their construction coincides with that in \cite{BNS} for normal coactions. One of the main goals of this paper is to provide a rigorous proof of this claim.

To this end, we adopt an alternative approach to Landstad duality, which is modeled after the construction of the maximalization of a coaction due to Fischer (\cite{Fischer-PhD}*{\S 4.5}) in the general case of regular locally compact quantum groups.  In the specific case of groups and maximal crossed products, a very detailed account is given in \cites{KOQ, KLQ:R-coactions}. The basic idea is to recover 
 a $C^*$-algebra $A$ from the tensor product $A\otimes \K$ by some algebra $\K=\K(\H)$ of compact operators
  as the set of all elements $a\in \M(A\otimes \K)$ such that for all $k\in \K$ we have
$a(1\otimes k)=(1\otimes k)a\in A\otimes \K$.

  In general,  an abstract $C^*$-algebra $E$ is isomorphic to a tensor product $A\otimes \K$ if and only if there exists a nondegenerate 
  $*$-homomorphism $i_\K:\K\to \M(E)$. We then have 
  \begin{equation*}
  A=C(E,i_\K):=\{a\in \M(E): a i_\K(k)=i_\K(k)a\in E\;\forall k\in \K\}
  \end{equation*}
  with isomorphism $A\otimes \K\cong E$ given by $a\otimes k\mapsto ai_\K(k)\in E$ for $a\in A, k\in \K$. If $\epsilon:E\to \M(E\otimes C^*(G))$ is a coaction of $G$ on $E$ which trivializes the image $i_\K(\K)\subseteq \M(E)$
in the sense that $\epsilon(i_\K(k))=i_{\K}(k)\otimes 1$ for all $k\in \K$, it has been shown by Fischer in \cite{Fischer-PhD} (but see \cites{KOQ, KLQ:R-coactions} for a detailed elaboration in the group case) that $\epsilon$ restricts to a coaction $\delta:A\to \M(A\otimes C^*(G))$.

Given a weak $G\rtimes G$-algebra $(B,\beta,\phi)$ as above and any duality crossed-product functor $\rtimes_{\mu}$ for $G$, the descent 
$$i_\K:=\phi\rtimes_\mu G: C_0(G)\rtimes_{\rt,\mu}G\cong \K(L^2(G))\to \M(B\rtimes_{\beta,\mu}G)$$
provides a $*$-homomorphism $i_{\K}:\K(L^2(G))\to \M(B\rtimes_{\beta,\mu}G)$
that is equivariant for the dual coactions $\widehat{\rt}$ and $\widehat{\beta_\mu}$. Using the canonical isomorphism $\K:=\K(L^2(G))\cong C_0(G)\rtimes_{\rt,\mu}G$ and applying a certain canonical exterior equivalence yields a coaction $\tilde{\beta}$ of
$G$ on $B\rtimes_{\beta,\mu}G$ which is trivial on  $i_\K(\K(L^2(G)))$. The 
restriction $\delta_\mu$ of $\tilde\beta$ to 
$A_\mu:=C(B\rtimes_{\beta,\mu}G,i_\K)$ provides us with the alternative description of the $\mu$-coaction $(A_\mu,\delta_\mu)$. 

For a twist $\sigma=(\T\stackrel{\iota}\into  G_\sigma \stackrel{q}\onto G)$ as above, instead of using a deformed weak $G\rtimes G$-algebra $(B^\sigma,\beta^\sigma, \phi^\sigma)$ as in \cite{BE:deformation}, we replace the crossed product $B\rtimes_{\beta,\mu}G$ 
in Fischer's construction by the twisted crossed product 
$B\rtimes_{(\beta,\iota^\sigma),\mu}G$. The structure map $\phi:C_0(G)\to \M(B)$ descents to an inclusion of twisted crossed products
$$\phi\rtimes G: C_0(G)\rtimes_{(\rt,\iota^\sigma)}G\cong \K(L^2(G))\to 
\M(B\rtimes_{(\beta,\iota^\sigma),\mu}G).$$
Again, replacing the dual coaction $\widehat{(\beta,\iota^\sigma)}$ by a suitable exterior equivalent coaction yields a coaction of $G$ on $B\rtimes_{(\beta,\iota^\sigma),\mu}G$ which trivializes 
the image of $\K(L^2(G))$ in $\M(B\rtimes_{(\beta,\iota^\sigma),\mu}G)$. Thus, Fischer's general procedure provides a cosystem $(A_\mu^\sigma, \delta_\mu^\sigma)$. However, it is not instantly clear that this 
$\sigma$-deformed cosystem is isomorphic to the one constructed previously. The major part of this paper is devoted to establish such an isomorphism. We achieve this by showing that there exists an isomorphism $$B^\sigma\rtimes_{\beta^\sigma, \mu}G\cong B\rtimes_{(\beta,\iota^\sigma),\mu}G$$ 
which is equivariant for the dual coactions and intertwines the inclusions of $\K(L^2(G))$. 
This leads directly to the canonical isomorphisms:
$$A_\mu^\sigma\rtimes_{\delta_\mu^\sigma}\dualG\rtimes_{\dual\delta_\mu^\sigma,\mu}G\cong A_\mu^\sigma\otimes\K\cong A_\mu^\sigma\rtimes_{\delta_\mu^\sigma}\dualG\rtimes_{(\dual\delta_\mu^\sigma,\iota^\sigma),\mu}G.$$
In particular, applying this to reduced crossed products, our results show that the reduced deformed pair $(A^\sigma_r, \delta^\sigma_r)$ aligns with the construction given in \cite{BNS}.

\section{Actions, coactions and their (exotic) crossed products}\label{sec-prel}

For terminology and notation concerning (co)actions, their (exotic) crossed products, and duality -- particularly Landstad duality for coactions in terms of generalized fixed-point algebras -- we refer the reader to our previous paper \cite{BE:deformation}. Here, we briefly recall some fundamental concepts and notation essential for the developments in this work.

Throughout the paper, $G$ denotes a locally compact group with a fixed Haar measure. Continuous actions of $G$ on a \cstar{}algebra $B$ will be written as $\beta\colon G\car B$. For simplicity, we denote the maximal crossed product by $B\rtimes_\beta G$, and $B\rtimes_{\beta,r}G$ for the reduced crossed product. In general, we write $B\rtimes_{\beta,\mu}G$ for any other (exotic) crossed product, meaning a \cstar completion of the convolution $*$-algebra $\contc(G,B)$ lying between the maximal and the reduced crossed product. 
 
A crossed product $B\rtimes_{\beta,\mu}G$ is called a {\em duality crossed product} if the dual coaction $\widehat{\beta}$ on $B\rtimes_{\beta}G$ factors through a coaction $\widehat{\beta}_\mu$ on $B\rtimes_{\beta,\mu}G$.
A {\em crossed-product functor} is a functor $(B,\beta)\mapsto B\rtimes_{\beta,\mu}G$ from the category of $G$-\cstar{}algebras (i.e. \cstar{}algebras endowed with continuous $G$-actions) to the category of 
\cstar{}algebras that sends actions $\beta:G\car B$ to crossed products $B\rtimes_{\beta,\mu}G$ such that for any $G$\nb{}-equivariant \Star{}homomorphism $\Phi:(B,\beta)\to (B',\beta')$ the associated \Star{}homomorphism $\Phi\rtimes_\mu G:B\rtimes_{\beta,\mu}G\to B'\rtimes_{\beta',\mu}G$
extends $\Phi\rtimes_{alg}G: C_c(G,B)\to C_c(G,B'); f\mapsto \Phi\circ f$. 
If all $\rtimes_\mu$-crossed products are duality crossed products, then $\rtimes_\mu$ is called a 
{\em duality crossed-product functor}. Duality functors exist in abundance, for example, the maximal and reduced crossed-product functors, as well as all correspondence functors as studied in \cite{BEW} are duality functors (\cite{BEW2}*{Theorem 4.14}).

\begin{comment}
    For most results in this article we need to know in addition that our crossed-product functors $\rtimes_\mu$ are also functorial for correspondences, i.e.,  that $\rtimes_\mu$ is a {\em correspondence crossed-product functor} as introduced and studied in \cite{BEW}. In particular, correspondence crossed-product functors preserve Morita equivalences. Notice that the maximal crossed-product functor $\rtimes_{\max}$ and the reduced crossed-product functor $\rtimes_r$ are correspondence functors, as well as all KLQ-functors. We refer to \cite{BEW2} for a general survey on exotic crossed-product functors. In particular, it is shown in \cite{BEW2}*{Theorem 4.14} that all correspondence crossed-product functors are duality functors as well. 
\end{comment}

A coaction of $G$ on a \cstar{}algebra $A$  will usually be denoted by the symbol $\delta\colon A\to \M(A\otimes C^*(G))$. Its crossed product will be written as $A\rtimes_\delta\dualG$. Recall that $A\rtimes_\delta\dualG$ can be 
realized as 
$$\overline{\spn}\big((\id\otimes\lambda)\circ\delta(A)(1\otimes M(C_0(G))\big)\subseteq \M(A\otimes \K(L^2(G))),$$
where $M:C_0(G)\to \B(L^2(G))$ is the representation by multiplication operators. We often write: 
$$j_A:=(\id\otimes\lambda)\circ \delta:A\to \M(A\rtimes_\delta\dualG)\;\text{and}\; j_{C_0(G)}:=1\otimes M:C_0(G)\to \M(A\rtimes_\delta\dualG)$$
for the canonical morphisms from $A$ and $C_0(G)$ into $\M(A\rtimes_\delta\dualG)$. The dual action $\widehat{\delta}:G\car A\rtimes_{\delta}\dualG$ is determined by the equation 
$$\widehat\delta_g\big(j_A(a)j_{C_0(G)}(f)\big)=j_A(a)j_{C_0(G)}(\rt_g(f)),$$
where $\rt:G\car C_0(G)$ denotes the action by right translations. 

Nilsen \cite{Nilsen:Duality}*{Corollary~2.6} showed that for every coaction $\delta:A\to\M(A\otimes C^*(G))$, there exists a canonical 
surjective \Star{}homomorphism
$$\Psi_{\max}: A\rtimes_\delta\dualG\rtimes_{\widehat\delta}G\onto A\otimes \K(L^2(G))$$
given as the integrated form of the covariant representation $(j_A\rtimes j_{C_0(G)}, 1\otimes\rho)$.
The coaction  $\delta$ is called {\em maximal} if $\Psi_\max$ is an isomorphism, and it is called {\em normal} if it factors through an isomorphism $ A\rtimes_\delta\dualG\rtimes_{\widehat\delta,r}G\congto A\otimes \K(L^2(G))$. In general, it factors through an isomorphism
\begin{equation}\label{eq:Katayama-mu-duality} \Psi_\mu: A\rtimes_\delta\dualG\rtimes_{\widehat\delta,\mu}G\congto A\otimes \K(L^2(G))
\end{equation}
for some (possibly exotic) duality crossed product $\rtimes_\mu$.\footnote{Note that $\rtimes_\mu$ may not always be associated to a crossed-product functor.} In this case, we say that $(A,\delta)$ is a $\mu$-coaction to indicate that it satisfies Katayama duality for the $\mu$-crossed product.

We write $\K:=\K(L^2(G))$ and define the coaction $\delta\otimes_* \id_{\K}:A\otimes \K\to \M(A\otimes \K\otimes C^*(G))$ by 
\begin{equation*}
\delta\otimes_* \id_{\K}:=(\id_A\otimes \Sigma)\circ (\delta\otimes \id_{\K}),
\end{equation*}
where $\Sigma: C^*(G)\otimes \K\to \K\otimes C^*(G)$ denotes the flip map. 
Let 
\begin{equation}\label{eq-wG}
w_G:=(s\mapsto u_s)\in U\M(C_0(G)\otimes C^*(G))\end{equation}
where $s\mapsto u_s\in U\M(C^*(G))$ denotes the canonical representation.
It is well known (e.g., see \cite{EKQ}*{Lemma 3.6}) that if 
$W:=( M \otimes \id_G)(w_G)\in U\M(\K(L^2(G))\otimes C^*(G))$, then $1\otimes W$ is a one-cocycle for 
$\delta\otimes_* \id_{\K}$. This leads to the new coaction 
\begin{equation}\label{eq-tildedelta}
\widetilde{\delta}:=\Ad (1\otimes W)\circ (\delta\otimes_* \id_{\K})
\end{equation}
of $G$ on $A\otimes \K$. The following proposition establishes a fundamental correspondence between coactions and their double duals:

\begin{proposition}\label{prop-coact-doubledual}
Suppose that $\|\cdot\|_\mu$ and $A\rtimes_{\delta}\widehat{G}\rtimes_{\widehat\delta,\mu}G$ are as above. Then the double dual coaction 
$$\widehat{\widehat{\delta}\,}:A\rtimes_\delta \widehat{G}\rtimes_{\widehat{\delta}}G\to \M(A\rtimes_\delta \widehat{G}\rtimes_{\widehat{\delta}}G\otimes C^*(G))$$
factors through a (double dual) coaction 
$\widehat{\widehat{\delta}\,}_{\!\!\mu}$ of $G$ on $A\rtimes_{\delta}\widehat{G}\rtimes_{\widehat\delta,\mu}G$ which corresponds to the coaction 
$\widetilde{\delta}$ on $A\otimes \K$ via the isomorphism $\Psi_\mu$.
\end{proposition}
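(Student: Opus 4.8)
The plan is to split the statement into its two assertions --- the factorization of $\widehat{\widehat\delta}$ through a coaction $\widehat{\widehat\delta}_\mu$, and the identification of $\widehat{\widehat\delta}_\mu$ with $\widetilde\delta$ under $\Psi_\mu$ --- and to treat the second by a computation on generators. The first assertion is immediate from the standing hypothesis: by the defining property of a duality crossed product, applied to the action $\widehat\delta$ of $G$ on $A\rtimes_\delta\dualG$, the dual coaction $\widehat{\widehat\delta}$ on the maximal crossed product $A\rtimes_\delta\dualG\rtimes_{\widehat\delta}G$ factors through a coaction $\widehat{\widehat\delta}_\mu$ on $A\rtimes_\delta\dualG\rtimes_{\widehat\delta,\mu}G$. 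Write $B:=A\rtimes_\delta\dualG$, let $q\colon B\rtimes_{\widehat\delta}G\onto B\rtimes_{\widehat\delta,\mu}G$ be the canonical surjection, so that $q$ intertwines $\widehat{\widehat\delta}$ with $\widehat{\widehat\delta}_\mu$ and Nilsen's map factors as $\Psi_{\max}=\Psi_\mu\circ q$. Since $q$ is surjective, the desired relation $(\Psi_\mu\otimes\id)\circ\widehat{\widehat\delta}_\mu=\widetilde\delta\circ\Psi_\mu$ follows once I establish its maximal counterpart $(\Psi_{\max}\otimes\id)\circ\widehat{\widehat\delta}=\widetilde\delta\circ\Psi_{\max}$.

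Both sides of this last equation are determined by their values on the canonical generators of $B\rtimes_{\widehat\delta}G$, so it suffices to test it on $i_B(j_A(a))$, $i_B(j_{C_0(G)}(f))$ and $i_G(s)$. Under $\Psi_{\max}$ these map to $j_A(a)=(\id\otimes\lambda)\delta(a)$, to $1\otimes M(f)$ and to $1\otimes\rho_s$ in $\M(A\otimes\K)$, while $\widehat{\widehat\delta}$ sends the two elements of $i_B(B)$ to $(\cdot)\otimes1$ and sends $i_G(s)$ to $i_G(s)\otimes u_s$. Thus the problem reduces to the three identities $\widetilde\delta(j_A(a))=j_A(a)\otimes1$, $\widetilde\delta(1\otimes M(f))=(1\otimes M(f))\otimes1$ and $\widetilde\delta(1\otimes\rho_s)=(1\otimes\rho_s)\otimes u_s$.

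For the two generators coming from $\K$ the verification is short. As the $A$-leg is scalar, $\delta\otimes_*\id_\K$ leaves $1\otimes M(f)$ and $1\otimes\rho_s$ fixed up to the flip, and one then invokes the two structural features of the cocycle $W=(M\otimes\id)(w_G)$: that $W$ commutes with $M(f)\otimes1$, and that conjugation by $W$ implements the comultiplication on the regular representation, i.e.\ $W(\rho_s\otimes1)W^*=\rho_s\otimes u_s$ (with the conventions of the paper). The generator $j_A(a)=(\id\otimes\lambda)\delta(a)$ is the crux. Here $\delta\otimes_*\id_\K$ acts nontrivially, and the plan is to apply the coaction identity $(\delta\otimes\id)\circ\delta=(\id\otimes\delta_G)\circ\delta$, where $\delta_G$ is the comultiplication $u_s\mapsto u_s\otimes u_s$, in order to rewrite $(\delta\otimes_*\id_\K)(j_A(a))$ so that the new $C^*(G)$-leg carries precisely the comultiplication of the $\lambda$-leg. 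Conjugating by $1\otimes W$ and using the companion relation $W(\lambda_s\otimes u_s)W^*=\lambda_s\otimes1$ then collapses the expression back to $j_A(a)\otimes1$, as needed.

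I expect the $j_A(A)$-computation to be the main obstacle: it is the only place where $\delta\otimes_*\id_\K$ is not essentially trivial, and it works only because the comultiplication produced by the coaction identity is exactly the one absorbed by conjugation with $W$. The remaining points --- the factorization and the descent from the maximal to the $\mu$-level --- are formal, resting on the duality hypothesis and on the surjectivity of $q$ together with $\Psi_{\max}=\Psi_\mu\circ q$. Accordingly, most of the care will go into fixing the conventions for $w_G$, for the flip in $\delta\otimes_*\id_\K$, and for $\lambda$ and $\rho$, so that the two displayed $W$-relations hold on the nose; once these are in place, the three generator checks are routine.
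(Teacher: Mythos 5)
Your proposal is correct in substance, but it takes a genuinely different route from the paper. The paper's entire proof is a citation: it invokes \cite{EKQ}*{Lemma~3.8}, which asserts that the Nilsen surjection of \eqref{eq:Katayama-mu-duality} is $\widehat{\widehat{\delta}}$--$\widetilde{\delta}$ equivariant, and leaves the descent from the maximal to the $\mu$-level implicit. You instead reprove that external lemma from scratch: you isolate exactly the three generator identities $\widetilde\delta(j_A(a))=j_A(a)\otimes 1$, $\widetilde\delta(1\otimes M(f))=(1\otimes M(f))\otimes 1$ and $\widetilde\delta(1\otimes\rho_s)=(1\otimes\rho_s)\otimes u_s$ that equivariance of $\Psi_{\max}$ amounts to, handle the only non-formal one ($j_A(A)$) via the coaction identity $(\delta\otimes\id)\circ\delta=(\id\otimes\delta_G)\circ\delta$ followed by absorption of the resulting comultiplication under conjugation by the $W$-unitary, and then descend to the $\mu$-level by surjectivity of $q$ together with $\Psi_{\max}=\Psi_\mu\circ q$ (your factorization step, obtained from the duality hypothesis, is also fine). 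Both the reduction and the list of generator checks are exactly right. What the paper's route buys is brevity; what yours buys is a self-contained argument, independent of matching conventions with \cite{EKQ}, which makes visible precisely where the coaction identity and the structure of $W$ enter.

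One point you flagged but must still resolve, and it is not in your favor as displayed: with the paper's stated conventions ($w_G=(s\mapsto u_s)$, so $(W\xi)(t)=u_t\xi(t)$, and $\rho$ the right regular representation), the two structural relations hold with the adjoints on the other side, namely $W^*(\rho_s\otimes 1)W=\rho_s\otimes u_s$ and $W^*(\lambda_s\otimes u_s)W=\lambda_s\otimes 1$, rather than $W(\rho_s\otimes 1)W^*=\rho_s\otimes u_s$ and $W(\lambda_s\otimes u_s)W^*=\lambda_s\otimes 1$ as you wrote. This is exactly the computation carried out in Lemma~\ref{lem-onecocycle} of the paper (in the twisted setting, which specializes to the untwisted one). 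Consequently the coaction for which the three generator identities hold is $\Ad(1\otimes W^*)\circ(\delta\otimes_*\id_{\K})$; the mismatch with \eqref{eq-tildedelta} as printed is an internal inconsistency of the paper itself (compare \eqref{eq-tildedelta} with Lemma~\ref{lem-onecocycle}), and it is repaired uniformly by interchanging $W$ and $W^*$ throughout. With that bookkeeping fixed, your three generator computations go through verbatim, so this is a convention error to be nailed down rather than a gap in the argument.
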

\begin{proof} This is a direct consequence of 
  \cite{EKQ}*{Lemma 3.8}, which states that  the surjection $\Psi_\mu$ of ~\eqref{eq:Katayama-mu-duality} is $\widehat{\widehat\delta}-\tilde\delta$ 
equivariant. 
\end{proof}

The triple $(A\rtimes_\delta\dualG,\widehat{\delta}, j_{C_0(G)})$ appearing above is the prototype of what  we call a \emph{weak} $G\rtimes G$-algebra
$(B,\beta,\phi)$ as explaind in the introduction.
As a variant of the classical Landstad duality for reduced coactions \cite{Quigg:Landstad}, it is shown in \cite{Buss-Echterhoff:Exotic_GFPA} that,
given a weak $G\rtimes G$-algebra $(B,\beta,\phi)$, for any given \emph{duality crossed product} $B\rtimes_{\beta,\mu}G$, there exists a unique (up to isomorphism) $\mu$-coaction $(A_\mu, \delta_\mu)$ of $G$ such that  
\begin{equation}\label{eq-Landstad}
(A_\mu\rtimes_{\delta_\mu}\dual G, \widehat{\delta}_\mu, j_{C_0(G)})\cong (B, \beta, \phi).
\end{equation}
In particular, if we consider the maximal crossed-product functor $\rtimes_{\max}$, we
obtain the {\em maximalization} $(A_{\max}, \delta_{\max})$ of $(A,\delta)$ and if we use the reduced 
crossed-product functor $\rtimes_r$, we  will recover the {\em normalization} $(A_r,
\delta_r)$ of $(A,\delta)$. As a consequence of this, we obtain the following useful observation:

\begin{proposition}\label{prop-max-to-mu}
Suppose that  $\delta:A\to \M(A\otimes C^*(G))$ is a maximal coaction. Then, for every duality crossed-product  $A\rtimes_\delta\hatG\rtimes_{\hatdelta,\mu}G$, there exists a unique quotient $A_\mu$ of $A$ such that  
$\delta$ factors through a $\mu$-coaction 
$\delta_\mu:A_\mu\to\M(A_\mu\otimes C^*(G))$ and such that the canonical induced map
$A\rtimes_\delta \hatG\onto A_\mu\rtimes_{\delta_\mu}\hatG$ is an isomorphism.
\end{proposition}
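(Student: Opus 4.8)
The plan is to read this proposition as a refinement of Landstad duality \eqref{eq-Landstad}, applied to a single weak $G\rtimes G$-algebra but with two different crossed-product functors. First I would form the weak $G\rtimes G$-algebra $(B,\beta,\phi):=(A\rtimes_\delta\hatG,\hatdelta,j_{C_0(G)})$. Applying Landstad duality \eqref{eq-Landstad} with the chosen duality functor $\rtimes_\mu$ produces a unique (up to isomorphism) $\mu$-coaction $(A_\mu,\delta_\mu)$ together with a canonical isomorphism $(A_\mu\rtimes_{\delta_\mu}\hatG,\widehat{\delta_\mu},j_{C_0(G)})\cong(B,\beta,\phi)$; in particular this already supplies the isomorphism $A\rtimes_\delta\hatG\cong A_\mu\rtimes_{\delta_\mu}\hatG$ of the statement and certifies that $A_\mu$ is a $\mu$-coaction. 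Applying the same procedure with $\rtimes_{\max}$ instead recovers the maximalization of $(A,\delta)$, which equals $(A,\delta)$ because $\delta$ is assumed maximal: $\Psi_{\max}$ is then an isomorphism, so $B\rtimes_{\beta,\max}G\cong A\otimes\K$ carrying $i_\K^{\max}=\phi\rtimes_{\max}G$ onto $1\otimes\K$, and the Fischer fixed-point algebra $C(B\rtimes_{\beta,\max}G,i_\K^{\max})$ returns $A$.

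The content beyond Landstad duality is that $A_\mu$ is a \emph{quotient} of $A$ through which $\delta$ factors. To see this I would exploit the surjection $q_\mu\colon B\rtimes_{\beta,\max}G\onto B\rtimes_{\beta,\mu}G$ coming from $\|\cdot\|_\mu\le\|\cdot\|_{\max}$. Since $q_\mu$ is the identity on $C_c(G,B)$, it intertwines the descended inclusions $i_\K^{\max}=\phi\rtimes_{\max}G$ and $i_\K^\mu=\phi\rtimes_\mu G$ of $\K=\K(L^2(G))$, and it is equivariant for the double dual coactions. Transporting $q_\mu$ through $\Psi_{\max}$ and $\Psi_\mu$ and invoking Proposition~\ref{prop-coact-doubledual} on both $(A,\delta)$ and $(A_\mu,\delta_\mu)$, I obtain a surjection $A\otimes\K\onto A_\mu\otimes\K$ that fixes the copy $1\otimes\K$ and is $\widetilde\delta$-$\widetilde{\delta_\mu}$-equivariant, with $\widetilde\delta$ as in \eqref{eq-tildedelta}.

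Because this transported map carries $1\otimes\K$ to $1\otimes\K$, its strictly continuous extension $\bar q_\mu$ to multipliers restricts to the Fischer fixed-point algebras: for $a\in A=C(A\otimes\K,1\otimes\K)$ one has $\bar q_\mu(a)(1\otimes k)=q_\mu\big(a(1\otimes k)\big)\in A_\mu\otimes\K$, so $\bar q_\mu(a)\in C(A_\mu\otimes\K,1\otimes\K)=A_\mu$. This yields a $*$-homomorphism $\pi_\mu\colon A\to A_\mu$ satisfying $\pi_\mu\otimes\id_\K=q_\mu$ under the two reconstruction isomorphisms; surjectivity of $q_\mu$ then forces $\pi_\mu$ onto $A_\mu$, and the $\widetilde\delta$-$\widetilde{\delta_\mu}$-equivariance descends to the statement that $\pi_\mu$ intertwines $\delta$ and $\delta_\mu$. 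Hence $\delta$ factors through $\delta_\mu$ along the quotient $\pi_\mu$. I would then check that the induced map $\pi_\mu\rtimes\hatG\colon A\rtimes_\delta\hatG\to A_\mu\rtimes_{\delta_\mu}\hatG$ agrees with the Landstad isomorphism of the first step, hence is an isomorphism; this is a compatibility trace through $j_A$, $j_{C_0(G)}$ and the reconstruction maps. Uniqueness of the quotient follows from the uniqueness clause of Landstad duality: any quotient of $A$ carrying a $\mu$-coaction through which $\delta$ factors and inducing an isomorphism on the crossed products is Landstad-dual to $(B,\beta,\phi)$, hence isomorphic to $(A_\mu,\delta_\mu)$ compatibly with the quotient maps out of $A$.

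I expect the main obstacle to be exactly the Fischer descent of the third paragraph, namely verifying that $q_\mu$ restricts to a genuine \emph{surjective} coaction morphism between the generalized fixed-point algebras. The delicate points are that $\bar q_\mu$ maps the relative commutant $C(A\otimes\K,1\otimes\K)$ into $C(A_\mu\otimes\K,1\otimes\K)$ (which hinges on $q_\mu$ fixing $1\otimes\K$), and that the identity $\pi_\mu\otimes\id_\K=q_\mu$ holds under both reconstruction isomorphisms so that surjectivity and equivariance can be read off directly from $q_\mu$; matching the resulting $\pi_\mu\rtimes\hatG$ with the abstract Landstad isomorphism is the remaining bookkeeping.
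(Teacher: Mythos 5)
Your proposal is correct and takes essentially the same approach as the paper: the paper gives no separate proof, presenting the proposition as a direct consequence of Landstad duality \eqref{eq-Landstad} applied to the weak $G\rtimes G$-algebra $(A\rtimes_\delta\hatG,\hatdelta,j_{C_0(G)})$, which is precisely your first step. The additional work you do --- transporting the canonical surjection $B\rtimes_{\beta,\max}G\onto B\rtimes_{\beta,\mu}G$ through $\Psi_{\max}$ and $\Psi_\mu$, checking it fixes $1\otimes\K$, and restricting to the relative commutants to exhibit the quotient map $\pi_\mu\colon A\onto A_\mu$ with $\pi_\mu\otimes\id_\K=q_\mu$ --- is a sound filling-in of the quotient structure that the paper treats as implicit (deferring to its reference on generalized fixed-point algebras), and it involves no circularity, since Proposition~\ref{prop-coact-Fischer} and Proposition~\ref{prop-coact-doubledual} are independent of the present proposition.
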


We call $(A_\mu,\delta_\mu)$ the {\em $\mu$-ization } of $(A,\delta)$.
The  proposition implies that, in many cases, it suffices to focus on maximal coactions or the maximalization $(A_\max, \delta_\max)$ of a given coaction $(A,\delta)$, as the corresponding $\mu$-coactions $(A_\mu,\delta_\mu)$ can be recovered through the procedure outlined above.

%In the subsequent sections, we will leverage Landstad duality framework to study the deformation of coactions.

\section{The Fischer approach to Landstad duality}\label{subsec-Fischer}
Landstad duality, as described in \S \ref{sec-prel}, provides the main tool for the \emph{deformation of $C^*$-algebras  by coactions}, as introduced in \cite{BE:deformation}. In this section, we present an alternative approach to Landstad duality based on Fischer's approach to maximalizations of coactions for regular locally compact quantum groups  (see \cite{Fischer-PhD}*{\S 4.5}). This approach aligns more closely with the constructions in \cite{BNS} and offers a more suitable perspective for possible generalizations to 
(regular) locally compact quantum groups.

In the group case, Fischer's approach towards the maximalization $(A_\max,\delta_\max)$ of a coaction $(A,\delta)$ has been studied in more detail in \cites{KOQ, KLQ:R-coactions}. Recall from the introduction that, given a nondegenerate $*$-homomorphism $i_{\K}:\K(\H)\to \M(E)$ for a $C^*$-algebra $E$, where $\H$ is a fixed Hilbert space, we obtain a canonical isomorphism 
$$E\cong A\otimes \K(\H),$$ 
where $A$ is defined as: 
\begin{equation}\label{eq-relcom}
  A=C(E,i_\K):=\{a\in \M(E): a i_\K(k)=i_\K(k)a\in E\;\forall k\in \K(\H)\}.
\end{equation}
The induced isomorphism $A\otimes \K(\H)\cong E$ is given by $a\otimes k\mapsto a i_\K(k)\in E$ for $a\in A, k\in \K(\H)$. This isomorphism clearly intertwines $i_\K:\K\to \M(E)$ with the canonical inclusion $\K\to \M(A\otimes \K); k\mapsto 1\otimes k$. The following result is due to Fischer \cite{Fischer-PhD}*{\S 4.5}. A detailed account for groups is also given in  \cite{KOQ}*{Lemma 3.2}.

  \begin{proposition}\label{prop-coact-Fischer}
  Suppose that $i_\K:\K\to \M(E)$ is a nondegenerate $*$-homomorphism and that $\epsilon: E\to\M(E\otimes C^*(G))$ is a coaction such that $\epsilon(i_\K(k))=i_\K(k)\otimes 1$ for all $k\in \K$ (i.e., $\epsilon$ is trivial on $i_\K(\K)$). Then $\epsilon$ restricts to a coaction $\delta: A\to \M(A\otimes C^*(G))$ with $A=C(E,i_\K)$ as in (\ref{eq-relcom}), such that 
$\epsilon$ corresponds under the isomorphism $E\cong A\otimes\K$ to the coaction $\delta\otimes_*\id_{\K}$ defined by
\begin{equation*}
\delta\otimes_* \id_{\K}:=(\id_A\otimes \sigma)\circ (\delta\otimes \id_{\K})
\end{equation*}
where $\sigma: C^*(G)\otimes \K\to \K\otimes C^*(G)$ denotes the flip map.
\end{proposition}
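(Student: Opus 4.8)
The plan is to set $\delta:=\bar\epsilon|_A$, the restriction to $A=C(E,i_\K)\subseteq\M(E)$ of the canonical strictly-continuous extension $\bar\epsilon\colon\M(E)\to\M(E\otimes C^*(G))$, and to verify that this lands in $\M(A\otimes C^*(G))$ and is a coaction corresponding to $\delta\otimes_*\id_\K$. The structural observation driving everything is that, under the isomorphism $E\cong A\otimes\K$, the faithful embedding $k\mapsto i_\K(k)\otimes 1$ identifies $\M(A\otimes C^*(G))$ with the relative commutant $\{m\in\M(E\otimes C^*(G)): m(i_\K(k)\otimes 1)=(i_\K(k)\otimes 1)m\ \forall k\in\K\}$, while $A\otimes C^*(G)$ is the subset for which these products additionally lie in $E\otimes C^*(G)$; this is the base identity $C(E,i_\K)=A$ tensored with $C^*(G)$ and reshuffled by the flip $\sigma$.

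First I would show $\delta$ is well defined. For $a\in A$ and $k\in\K$, multiplicativity of $\bar\epsilon$ together with the hypothesis $\bar\epsilon(i_\K(k))=i_\K(k)\otimes 1$ gives
$$\epsilon(a)(i_\K(k)\otimes 1)=\bar\epsilon(a\,i_\K(k))=\bar\epsilon(i_\K(k)\,a)=(i_\K(k)\otimes 1)\epsilon(a),$$
the middle equality using $a\,i_\K(k)=i_\K(k)\,a$ for $a\in A$. Hence $\epsilon(a)$ lies in the relative commutant, so $\delta(a):=\epsilon(a)\in\M(A\otimes C^*(G))$ and $\delta$ is a $*$\nobreakdash-homomorphism. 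Transporting through $\theta\colon A\otimes\K\congto E$, $\theta(a\otimes k)=a\,i_\K(k)$, the same computation gives, on elementary tensors,
$$(\theta^{-1}\otimes\id)\circ\epsilon\circ\theta\,(a\otimes k)=\delta(a)_{13}(1\otimes k\otimes 1)=(\delta\otimes_*\id_\K)(a\otimes k),$$
where the subscript $13$ denotes the embedding of $\M(A\otimes C^*(G))$ into the first and third legs of $\M(A\otimes\K\otimes C^*(G))$. Thus the identification of $\epsilon$ with $\delta\otimes_*\id_\K$ is obtained essentially for free once $\delta$ is defined, and it remains only to check that $\delta$ is a coaction.

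Here the strategy is to descend the coaction axioms from those of $\epsilon$ (equivalently $\delta\otimes_*\id_\K$), exploiting that the $\K$\nobreakdash-leg is merely carried along. The coaction identity $(\delta\otimes\id)\circ\delta=(\id\otimes\delta_G)\circ\delta$ follows by restricting the coaction identity of $\epsilon$ to $A$ and then stripping the passive $\K$\nobreakdash-leg, tracking the flip $\sigma$ in $\otimes_*$ but finding it contributes nothing essential. For the nondegeneracy condition $\overline{\delta(A)(1\otimes C^*(G))}=A\otimes C^*(G)$ (which in particular forces $\delta$ to be a nondegenerate homomorphism) I would start from $\overline{\epsilon(E)(1\otimes C^*(G))}=E\otimes C^*(G)$; since $(\delta\otimes_*\id_\K)(a\otimes k)(1\otimes 1\otimes z)=[\delta(a)(1\otimes z)]_{13}(1\otimes k\otimes 1)$, the closed span on the left equals $\overline{\delta(A)(1\otimes C^*(G))}$ placed in legs $1,3$ with $\K$ in leg $2$, so its density in $A\otimes\K\otimes C^*(G)$ is equivalent to the desired density of $\delta(A)(1\otimes C^*(G))$ in $A\otimes C^*(G)$.

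The main obstacle I anticipate is making these ``descent through $\K$'' steps fully rigorous: passing between density statements in $A\otimes\K\otimes C^*(G)$ and in $A\otimes C^*(G)$, and between homomorphism identities on the respective multiplier algebras, requires slice maps $\id\otimes\omega\otimes\id$ for $\omega\in\K^*$ together with careful control of the strict extensions $\bar\epsilon$ and $\theta\otimes\id$. By contrast, the conceptual heart—defining $\delta$ and identifying $\epsilon$ with $\delta\otimes_*\id_\K$—is immediate from the relative-commutant description of $\M(A\otimes C^*(G))$ and the triviality of $\epsilon$ on $i_\K(\K)$.
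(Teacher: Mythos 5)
The paper offers no internal proof of this proposition --- it is attributed to Fischer \cite{Fischer-PhD}*{\S 4.5} and to \cite{KOQ}*{Lemma 3.2} --- and your argument is correct and essentially reproduces the standard proof from those sources: triviality of $\epsilon$ on $i_\K(\K)$ together with multiplicativity of the strict extension places $\epsilon(A)$ inside the relative commutant of $i_\K(\K)\otimes 1$, which is identified with $\M(A\otimes C^*(G))$ sitting in the first and third legs of $\M(A\otimes\K\otimes C^*(G))$, and the coaction axioms then descend by stripping the inert $\K$\nb-leg exactly as you indicate. Two points deserve to be made explicit in a full write-up: the identification of that relative commutant in the \emph{full multiplier algebra} with $\M(A\otimes C^*(G))$ is genuinely stronger than the base identity \eqref{eq-relcom} and requires the standard matrix-unit argument showing that the commutant of $1\otimes\K$ in $\M(D\otimes\K)$ is exactly $\M(D)\otimes 1$; and injectivity of $\delta$ (part of the definition of a coaction in this setting) should be recorded, though it is immediate here because the strict extension of the injective nondegenerate homomorphism $\epsilon$ is itself injective.
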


  \begin{remark}\label{rem-Fischer-functorial}
Fischer's construction is \emph{functorial} in the following sense: Suppose that $i_{\K(\H)}:\K(\H)\to \M(E)$ and 
$i_{\K(\H')}:\K(\H')\to \M(E')$ are nondegenerate $*$-homomorphisms and $\epsilon$ and $\epsilon'$ are coactions of $G$ on $E$ and $E'$ that are trivial on the images of
$i_{\K(\H)}$ and $i_{\K(\H)}$, respectively.
Let $(A, \delta)$ and $(A',\delta')$ denote the corresponding coactions as in the Proposition~\ref{prop-coact-Fischer}. 
If $\Phi:E\to E'$ is an $\epsilon-\epsilon'$ equivariant 
$*$-homomorphism such that $\Phi\big((i_{\K(\H)}(\K(\H))\big)=
i_{\K(\H')}(\K(\H'))$, then the restriction of $\Phi$ to $A\subseteq \M(E)$ induces a $\delta-\delta'$ equivariant homomorphism $\Phi|_A:A\to A'$.

In particular, if $\Phi:E\congto E'$ is an isomorphism, then $(A,\delta)\cong (A',\delta')$. The proof is given in \cite{Fischer-PhD}*{Anhang A} and also in 
\cite{KOQ}, where, indeed,  much more general functoriality properties of Fischer's construction are shown.
\end{remark}

Fischer’s approach allows us to obtain an alternative description of the coaction $(A_\mu,\delta_\mu)$, and hence to coaction Landstad duality: 
Recall that the crossed product $C_0(G)\rtimes_\rt G$
is isomorphic to $\K:=\K(L^2(G))$ via the covariant homomorphism $(M,\rho)$ (e.g. see \cite{Rieffel-Heisenberg}). Therefore, if $(B,\beta,\phi)$ is a weak $G\rtimes G$-algebra, then the $\rt-\beta$-equivariant $*$-homomorphism $\phi:C_0(G)\to \M(B)$ descents to a nondegenerate $*$-homomorphism 
$i_\K:\K\to \M(B\rtimes_\mu G)$ via the composition
$$\K\cong C_0(G)\rtimes_\rt G\stackrel{\phi\rtimes G}{\longrightarrow} \M(B\rtimes_{\max} G)\onto \M(B\rtimes_{\mu}G).$$
Now, if $(A_\mu,\delta_\mu)$ is the $\mu$-coaction corresponding to $(B,\beta,\phi)$ as in 
(\ref{eq-Landstad}), the isomorphism 
\begin{equation}\label{eq-isom}
B\rtimes_\mu G\cong A_\mu\rtimes_{\delta_\mu}\widehat{G}\rtimes_{\widehat{\delta_\mu},\mu}G\stackrel{\Psi_\mu}{\cong} 
A_\mu\otimes \mathcal K
\end{equation}
sends the image $i_\K(\K)\subseteq \M(B\rtimes_\mu G)$ to 
$1\otimes (M\rtimes \rho)(C_0(G)\rtimes_\rt G)=1\otimes \K$. Hence it follows that 
$A_\mu$ can be identified with the subalgebra 
 $$C(B\rtimes_\mu G, i_\K)=\{m\in \M(B\rtimes_\mu G): m i_\K(k)=i_\K(k)m\in B\rtimes_{\mu}G\; \forall k\in \K\}.$$ 
Moreover, if $B\rtimes_\mu G$ is a duality crossed product,  define 
\begin{equation}\label{eq-WB}
W_B:=((i_B\circ \phi)\otimes \id_G)(w_G)\in U\M(B\rtimes_\mu G\otimes C^*(G)),\end{equation}
 where 
$i_B\circ \phi:C_0(G)\to \M(B\rtimes_\mu G)$ is the composition of $\phi:C_0(G)\to \M(B)$ with the canonical homomorphism $i_B:B\to \M(B\rtimes_\mu G)$ and $w_G\in U\M(C_0(G)\otimes C^*(G))$ is as in (\ref{eq-wG}).
Then $W_B$ corresponds via (\ref{eq-isom}) to  the unitary $1\otimes W\in U\M(A_\mu\otimes \mathcal K\otimes C^*(G))$
of (\ref{eq-tildedelta}), and therefore we see from Proposition \ref{prop-coact-Fischer} that the coaction 
$\widetilde{\delta}_\mu=\Ad(1\otimes W)\circ (\delta_\mu\otimes_*\id_{\K})$ of $G$ on 
$A_\mu\otimes \K$ as in (\ref{eq-tildedelta}) corresponds to the dual coaction $\widehat{\beta}_\mu$  
on $B\rtimes_\mu G$, and hence
\begin{equation}\label{eq-tildebeta}
\widetilde{\beta}_\mu:=\Ad(W_B^*)\circ \widehat{\beta}_\mu
\end{equation}
corresponds to $\delta_\mu\otimes_*\id_{\K}$ on $A_\mu\otimes\K$. This implies that if we identify $A_\mu$ with 
$C(B\rtimes_\mu G, i_\K)$ as above, then $\delta_\mu$ can be recovered by the restriction of $\tilde{\beta}$ 
to $C(B\rtimes_\mu G, i_\K)$ as in Proposition \ref{prop-coact-Fischer}. 

Alternatively, we could have used Fischer's approach to the maximal crossed product 
$B\rtimes_\beta G=A\rtimes_\delta\hat G\rtimes_{\hatdelta} G$ 
to obtain the maximalization $(A_\max,\delta_\max)$ of $(A,\delta)$ and then passed to the appropriate quotient as in Proposition \ref{prop-max-to-mu}.

\section{A new approach to deformation}\label{sec-alternative}

% \rcomment{We need to verify notation in this section, after the changes we have made in previous sections... For instance, we need to change $\contz(G_\omega,\iota)$ by $\contz(G_{\bar\om},\iota)$...}

We now introduce a new method for deformation by group twists, inspired by Fischer's version of Landstad duality. This extends the constructions of 
Bhowmick, Neshveyev, and Sangha (\cite{BNS}) for deformation by Borel cocycles in the reduced case. 
Note that in the previous sections we needed to consider $\mu$-crossed products for a single $C^*$-algebra only, whereas below,  we need to apply the $\mu$-crossed product  to a variety of $C^*$-algebras. Therefore, we assume from now on that $\rtimes_\mu$ is a duality crossed-product functor, such as $\rtimes_\max$ or $\rtimes_r$.

As in \cite{BE:deformation}, instead of using cocycles, we employ twists $\sigma$, since this avoids many awkward computations. Recall that the twist $\sigma=(\T\stackrel{\iota}\into  G_\sigma \stackrel{q}\onto G)$ 
is just a central extension $G_\sigma$ of $G$ by $\T$. In what follows, we shall often write $\tig$ or $\tilde{t}$ for elements in $G_\sigma$, and we write $g$, resp.\ $t$, for their images in $G=G_\sigma/\T$ under the quotient map.

Given a twist $\sigma$ as above, we obtain a Green twisted action \cite{Green} $(\id, \iota^\sigma)$ of the pair $(G_\sigma, \T)$ on the complex numbers $\C$, where we 
write $\iota^\sigma$ for the inclusion $\iota^\sigma:\T=\U(\C)\into \C$. The {\em twisted group algebra}
$C^*(G, \sigma)$ for the twist $\sigma$ is just the twisted crossed product $\C\rtimes_{(\id, \iota^\sigma)}G$ (see \cite{CELY}*{Chapter 1} for a survey on Green's twisted crossed products).

More generally, given an action $\beta:G\car B$ of the group $G=G_\sigma/\T$ on a $C^*$-algebra $B$, we obtain a twisted action $(\beta,\iota^\sigma)$ of $(G_\sigma,\T)$ on $B$ by inflating $\beta$ to $G_\sigma$ via the quotient map $q:G_\sigma\to G$ (which, by abuse of notation, we still call $\beta$) and by composing 
$\iota^\sigma:\T\to \C$ with the inclusion $\C\to U\M(B), \lambda \mapsto \lambda 1_B$ (which we still call $\iota^\sigma$). 
Following \cite{BE:deformation}, we define the space 
\begin{equation}\label{eq-C0Giota)}
C_0(G_\sigma, \bar\iota):=\{f\in C_0(G_\sigma): f(\tig z)=\bar{z}f(\tig)\quad \forall \tig\in G_\sigma, z\in\T\}
\footnote{in \cite{BE:deformation} we called this $C_0(G_\sigma,\iota)$.}
\end{equation}
equipped with the right translation action $\rt^\sigma:G_\sigma\car C_0(G_\sigma,\bar\iota)$ provides a $\rt-(\rt,\iota^\sigma)$ equivariant 
Morita equivalence, where left and right actions and inner products are given by suitable pointwise multiplication of functions. 

Given a weak $G\rtimes G$-algebra $(B,\beta,\phi)$, the diagonal action $\gamma:=\rt^\sigma\otimes \beta: G_\sigma\car C_0(G_\sigma,\bar\iota)\otimes_BB=:\E_\sigma(B)$
turns $\E_\sigma(B)$ into a full $(\beta,\iota^\sigma)$-equivariant Hilbert $B$-module such that the action 
$\beta^\sigma:=\Ad \gamma$ factors through an action of $G$ on $B^\sigma:=\K(\E_\sigma(B))$. 
Therefore  $(\E_\sigma(B), \gamma)$ becomes 
an equivariant  $(B^\sigma,\beta^\sigma)-(B,(\beta,\iota^\sigma))$ equivalence bimodule (identifying 
the action $\beta^\sigma:G\car B^\sigma$ with the inflated twisted action $(\beta^\sigma\circ q, 1_\T):(G_\sigma, \T)\car B^\sigma$ as 
described in \cite{Echterhoff:Morita_twisted}).
Together with the morphism $\phi^\sigma:C_0(G)\to \M(B_\sigma)=\L_B(\E_\sigma(B))$
induced by the left action of $C_0(G)$ on $C_0(G_\sigma,\bar\iota)\otimes_BB$, we obtain the
$\sigma$-deformed weak $G\rtimes G$-algebra $(B^\sigma, \beta^\sigma, \phi^\sigma)$.

Starting with a $\mu$-coaction $(A_\mu,\delta_\mu)$ for some duality crossed-product functor $\rtimes_\mu$ and 
the corresponding weak $G\rtimes G$-algebra $(B,\beta,\phi)=(A_\mu\rtimes_{\delta_\mu}\hatG, \hatdelta_\mu, j_{C_0(G)})$, the application of $\mu$-Landstad duality (either using the approach of \cite{BE:deformation} or the Fischer approach described above) to the deformed weak $G\rtimes G$-algebra $(B^\sigma,\beta^\sigma, \phi^\sigma)$ then yields the deformed $\mu$-coaction 
$(A_\mu^\sigma, \delta_\mu^\sigma)$.

\begin{notation}\label{note-deform}
     We call $(A_\mu^\sigma,\delta_\mu^\sigma)$ the {\em $\sigma$-deformation} of $(A_\mu,\delta_\mu)$.
  \end{notation}
 
We now want to introduce a more direct approach to deformation by using Fischer's methods
directly to the twisted crossed-product $B\rtimes_{(\beta,\iota^\sigma)}G$ together with the dual coaction $\widehat{(\beta,\iota^\sigma)}$ and an inclusion of compact operators as explained below. 
The $\rt-\beta$ equivariant morphism $\phi:C_0(G)\to B$ is also equivariant 
for the twisted actions $(\rt,\iota^\sigma)$ and $(\beta,\iota^\sigma)$, respectively. It therefore 
descents to give a $*$-homomorphism 
\begin{equation}\label{eq-Phisigma}
   \Phi^\sigma:= \phi\rtimes G: C_0(G)\rtimes_{(\rt,\iota^\sigma)}G\to \M(B\rtimes_{(\beta,\iota^\sigma)}G),
\end{equation}
and, similarly, if we replace  $B\rtimes_{(\beta,\iota^\sigma)}G$ by any exotic crossed product 
$B\rtimes_{(\beta,\iota^\sigma),\mu}G$. Now we have the following fact

\begin{lemma}\label{lem-twisted-compact}
    Let $L^2(G_\sigma, \iota)$ denote the subspace of $L^2(G_\sigma)$ consisting of all elements
    $\xi\in L^2(G_\sigma)$ which satisfy $\xi(\tig z)=z\xi(\tig)$ for all $\tig\in G_\sigma, z\in \T$.
    Then the pair $(M^\sigma, \rho^\sigma)$ given by
    $$(M^\sigma(f)\xi)(\tig)=f(g)\xi(\tig)\quad\text{and}\quad 
    (\rho^\sigma_{\tig}\xi)(\tilde{t})=\Delta(g)^{1/2}\xi(\tilde{t}\tig)
    $$
    is a covariant representation for the twisted action $(\rt,\iota^\sigma):(G_\sigma,\T)\car C_0(G)$ whose integrated form induces an isomorphism
    $$M^\sigma\rtimes\rho^\sigma: C_0(G)\rtimes_{(\rt,\iota^\sigma)}G\congto \K(L^2(G_\sigma,\iota)).$$
\end{lemma}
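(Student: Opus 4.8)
\emph{Plan.} The statement has two independent parts: that $(M^\sigma,\rho^\sigma)$ is a covariant representation of the Green twisted action $(\rt,\iota^\sigma)$, and that its integrated form is an isomorphism onto $\K(L^2(G_\sigma,\iota))$. The first part is a direct computation, and the second is the substantial one; it is the twisted analogue of the classical fact $C_0(G)\rtimes_\rt G\cong\K(L^2(G))$ implemented by $(M,\rho)$ recalled above, and I would structure its proof to parallel that case.

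\emph{Covariance.} First I would check that $M^\sigma(f)$ and $\rho^\sigma_{\tig}$ preserve $L^2(G_\sigma,\iota)$: for $M^\sigma(f)$ this is immediate since $q(\tig z)=q(\tig)$, and for $\rho^\sigma_{\tig}$ it uses centrality of $\T$, so that $\tilde{t}z\tig=\tilde{t}\tig z$. The factor $\Delta(g)^{1/2}$ is exactly what makes $\rho^\sigma$ unitary: computing $\|\rho^\sigma_{\tig}\xi\|^2=\int_G\Delta(g)\,|\xi(\tilde{t}\tig)|^2\,dt$ over $G=G_\sigma/\T$ and substituting $t\mapsto tg$ produces a factor $\Delta(g)^{-1}$ from right-translation invariance of Haar measure, which cancels $\Delta(g)$. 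That $\rho^\sigma$ is multiplicative follows from multiplicativity of $\Delta$ and of $q$. The covariance identity $\rho^\sigma_{\tig}M^\sigma(f)(\rho^\sigma_{\tig})^{*}=M^\sigma(\rt_g f)$ is then a three-line pointwise computation in which the two $\Delta^{1/2}$ factors cancel and the argument $q(\tilde{s}\tig)=sg$ produces the right translate. Finally, for $z\in\T$ one has $(\rho^\sigma_z\xi)(\tilde{t})=\xi(\tilde{t}z)=z\xi(\tilde{t})$ by the defining relation of $L^2(G_\sigma,\iota)$, while $M^\sigma(\iota^\sigma(z))=M^\sigma(z\cdot 1)=z\cdot\mathrm{id}$; hence $\rho^\sigma_z=M^\sigma(\iota^\sigma(z))$, which is precisely the Green twist condition.

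\emph{The isomorphism.} Let $\Lambda:=M^\sigma\rtimes\rho^\sigma$ and let $\mathcal{D}$ denote the dense twisted convolution $*$-subalgebra of $C_0(G)\rtimes_{(\rt,\iota^\sigma)}G$ consisting of the continuous functions $F\colon G_\sigma\to C_0(G)$, compactly supported modulo $\T$, with $F(\tig z)=\bar{z}F(\tig)$. Being the integrated form of a covariant representation, $\Lambda$ is automatically a $*$-homomorphism. I would compute $\Lambda(F)\xi$ on $\xi\in L^2(G_\sigma,\iota)$ and, after the substitution $\tilde{u}=\tig\tilde{h}$, identify it as the integral operator with kernel
$$K_F(\tig,\tilde{u})=\Delta(g^{-1}u)^{1/2}\,F(\tig^{-1}\tilde{u})(g),\qquad g=q(\tig),\ u=q(\tilde{u}).$$
A short check gives $K_F(\tig z,\tilde{u}z')=z\bar{z'}K_F(\tig,\tilde{u})$, which is exactly the equivariance characterising the kernels of operators on $L^2(G_\sigma,\iota)$; moreover $K_F$ is continuous and compactly supported modulo $\T\times\T$, hence Hilbert--Schmidt and compact. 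Rewriting the kernel as $K_F(\tig,\tig\tilde{w})=\Delta(w)^{1/2}F(\tilde{w})(g)$ and using centrality of $\T$ to see that this is independent of the chosen $\T$-lift of $g$, one verifies that $F\mapsto K_F$ is a linear bijection of $\mathcal{D}$ onto a subspace of such kernels dense in the Hilbert--Schmidt operators, hence in $\K(L^2(G_\sigma,\iota))$. In particular $\Lambda$ is injective on $\mathcal{D}$ with dense range; since the image of a $*$-homomorphism of $C^*$-algebras is closed, $\Lambda$ maps $C_0(G)\rtimes_{(\rt,\iota^\sigma)}G$ \emph{onto} $\K(L^2(G_\sigma,\iota))$.

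\emph{Main obstacle.} The delicate point is to upgrade injectivity on the dense subalgebra $\mathcal{D}$ to injectivity on the completion, i.e.\ to match the $C^*$-norms; injectivity on a dense $*$-subalgebra does not by itself control the quotient $\ker\Lambda$. I would resolve this by simplicity of the domain: the equivariant Morita equivalence $C_0(G_\sigma,\bar\iota)$ recalled above descends (by the standard descent of equivariant Morita equivalences to full crossed products) to a Morita equivalence between $C_0(G)\rtimes_{(\rt,\iota^\sigma)}G$ and $C_0(G)\rtimes_\rt G\cong\K(L^2(G))$, so the twisted crossed product is Morita equivalent to $\C$ and therefore simple. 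A nonzero $*$-homomorphism out of a simple $C^*$-algebra is injective; as $\Lambda$ is nonzero and, by the previous paragraph, surjective onto $\K(L^2(G_\sigma,\iota))$, it is an isomorphism. I expect the only genuinely fiddly parts to be the bookkeeping of the $\Delta^{1/2}$ and $\T$-equivariance factors in the kernel computation, and the precise identification of which systems the descended Morita equivalence relates.
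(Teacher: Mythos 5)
Your proof is correct, but it takes a genuinely different route from the paper's. The paper disposes of the lemma in two sentences by invoking Green's version of the Mackey machine for twisted transformation-group algebras (\cite{Green}, \cite{CELY}*{Chapter 1}): since $G$ acts freely and transitively on itself, the only irreducible representation of $C_0(G)\rtimes_{(\rt,\iota^\sigma)}G$ is the one induced from evaluation at $e\in G$, whence the algebra is the compacts on the space of that induced representation, and one then checks that this representation is exactly $(M^\sigma,\rho^\sigma)$. You instead argue bottom-up: covariance by direct computation; surjectivity via the explicit kernel formula $K_F(\tig,\tilde u)=\Delta(g^{-1}u)^{1/2}F(\tig^{-1}\tilde u)(g)$, density of such equivariant kernels in the Hilbert--Schmidt class, and closedness of the range of a \Star{}homomorphism; and injectivity from simplicity of the domain, obtained by descending the $\rt-(\rt,\iota^\sigma)$-equivariant equivalence $C_0(G_\sigma,\bar\iota)$ to full twisted crossed products and applying the Rieffel correspondence of ideals. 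You correctly isolate the one real issue — that injectivity on a dense $*$-subalgebra does not pass to the $C^*$-completion — which the paper's top-down argument never has to confront. Note that the descended bimodule $C_0(G_\sigma,\bar\iota)\rtimes_{\rt^\sigma}G$ you use for simplicity is precisely the object the paper constructs later (Proposition \ref{prop-imp-iso}); there is no circularity, since the descent of equivariant Morita equivalences to full twisted crossed products is an external result (\cite{Echterhoff:Morita_twisted}), not a consequence of this lemma, but it does mean your proof front-loads machinery the paper defers to Section 5. What each approach buys: the paper's is shorter and identifies the representation structurally as an induced representation; yours is self-contained modulo the descent theorem, avoids the Mackey machine entirely, and yields the concrete kernel picture of $\K(L^2(G_\sigma,\iota))$, which is genuinely useful. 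The only soft spot is that density of the kernels $K_F$ in the Hilbert--Schmidt operators is asserted rather than proved, but the bijection $F\mapsto K_F$ you set up reduces this to a routine approximation argument identical to the untwisted case.
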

\begin{proof}
    The proof is a consequence of Green's version of the Mackey machine (e.g., see \cite{Green} or \cite{CELY}*{Chapter 1}) which implies that the only irreducible representation of $C_0(G)\rtimes_{(\rt,\iota^\sigma)}G$ is the one induced from the representation of $C_0(G)$ given by evaluation at $e\in G$. 
    One then checks that this representation is just the one described in the lemma.
\end{proof}

\begin{remark} We note that any chosen Borel section $\mathfrak s:G\to G_\sigma$ induces an isomorphism  
$L^2(G_\sigma,\iota)\congto L^2(G); \xi\mapsto \xi\circ \mathfrak s$.
\end{remark}

For the twisted action $(\beta,\iota^\sigma):(G_{\sigma},\T)\car B$ there is a dual coaction $$\widehat{(\beta, \iota^\sigma)}:B\rtimes_{(\beta,\iota^\sigma)}G\to \M(B\rtimes_{(\beta,\iota^\sigma)}G\otimes C^*(G))$$
given by the integrated form of the covariant homomorphism 
$(i_B\otimes 1, i_{G_\sigma}\otimes u)$ where $(i_B,i_{G_\sigma}):(B,G_\sigma)\to \M(B\rtimes_{(\beta,\iota^\sigma)}G)$ is the universal (twisted) representation of 
$(\beta,\iota^\sigma):(G_\sigma, \T)\car B$ and $u:G\to U\M(C^*(G))$ is the universal representation 
of $G$ (see \cite{Quigg-full}*{Proposition 3.1} where the construction is given 
for general Green-twisted crossed products).  
Similarly, we obtain the dual coaction $\widehat{(\rt,\iota^\sigma)}$ of $G$ 
on $C_0(G)\rtimes_{(\rt,\iota)}G\cong \K(L^2(G_\sigma,\iota))$ such that the 
$*$-homomorphism  $\Phi^\sigma$ of (\ref{eq-Phisigma})
is $\widehat{(\rt,\iota^\sigma)}-\widehat{(\beta,\iota^\sigma)}$ equivariant.
Recall that  $w_G:=(g\mapsto u_g)\in U\M(C_0(G)\otimes C^*(G))$.

\begin{lemma}\label{lem-onecocycle}
Let $W^\sigma=M^\sigma\otimes\id_G(w_G)\in U\M(\K(L^2(G_\sigma, \iota))\otimes C^*(G))$.
Then, the isomorphism $M^\sigma\rtimes \rho^\sigma: C_0(G)\rtimes_{(\rt,\iota)}G\to \K(L^2(G_{\sigma}, \iota))$ identifies the dual coaction 
$\widehat{(\rt,\iota^\sigma)}$ with the coaction 
$k\mapsto (W^\sigma)^*(k\otimes 1)W^\sigma$ of $G$ on $\K(L^2(G_\sigma,\iota))$.

As a consequence, if $(B,\beta,\phi)$ is a weak $G\rtimes G$-algebra, then  
$$W_B:=\big((i_B\circ \phi)\otimes \id_G\big)(w_G)\in U\M\big(B\rtimes_{(\beta,\iota^\sigma)}G\otimes C^*(G)\big)$$ is a one-cocycle for the dual coaction $\widehat{(\beta,\iota^\sigma)}$ on $B\rtimes_{(\beta,\iota)}G$
 such that 
the coaction $\epsilon:=\Ad W_B\circ \widehat{(\beta,\iota^\sigma)}$ fixes the image $\phi\rtimes G\big(C_0(G)\rtimes_{(\rt,\iota^\sigma)}G)\big)$ in $\M(B\rtimes_{(\beta,\iota^\sigma)}G)$.
\end{lemma}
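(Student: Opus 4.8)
The plan is to establish the first (structural) statement by a computation on generators, and then to obtain the two assertions about $W_B$ as formal consequences, using throughout that the descent $\Phi^\sigma=\phi\rtimes G$ is equivariant for the dual coactions $\widehat{(\rt,\iota^\sigma)}$ and $\widehat{(\beta,\iota^\sigma)}$. Recall first that $M^\sigma\rtimes\rho^\sigma$ is the integrated form of the covariant pair $(M^\sigma,\rho^\sigma)$, so it carries the canonical generators $i_{C_0(G)}(f)$ and $i_{G_\sigma}(\tig)$ of the twisted system $(C_0(G),G_\sigma,\rt,\iota^\sigma)$ to $M^\sigma(f)$ and $\rho^\sigma_{\tig}$, respectively. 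By the definition of the dual coaction as the integrated form of $(i_{C_0(G)}\otimes 1,\,i_{G_\sigma}\otimes u)$, this translates into $\widehat{(\rt,\iota^\sigma)}(M^\sigma(f))=M^\sigma(f)\otimes 1$ and $\widehat{(\rt,\iota^\sigma)}(\rho^\sigma_{\tig})=\rho^\sigma_{\tig}\otimes u_g$ with $g=q(\tig)$. Since both $\widehat{(\rt,\iota^\sigma)}$ and $k\mapsto (W^\sigma)^*(k\otimes 1)W^\sigma$ are $*$-homomorphisms and the elements $M^\sigma(f)$, $\rho^\sigma_{\tig}$ generate $\K(L^2(G_\sigma,\iota))$, it suffices to verify the two identities $(W^\sigma)^*(M^\sigma(f)\otimes 1)W^\sigma=M^\sigma(f)\otimes 1$ and $(W^\sigma)^*(\rho^\sigma_{\tig}\otimes 1)W^\sigma=\rho^\sigma_{\tig}\otimes u_g$.

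I would carry these out by viewing $W^\sigma=(M^\sigma\otimes\id_G)(w_G)$, for a faithful representation of $C^*(G)$ on a Hilbert space $\H$, as the decomposable field of unitaries $\tig\mapsto u_g$ acting on the second leg of $L^2(G_\sigma,\iota)\otimes\H$. As $M^\sigma(f)\otimes 1$ acts fiberwise as the scalar $f(g)$, it commutes with this field and the first identity is immediate. For the second, the right-translation formula for $\rho^\sigma$ together with the group relation $u_g^*u_{gh}=u_h$ makes the modular factors combine correctly and produces exactly $\rho^\sigma_{\tig}\otimes u_g$; the $\T$-equivariance defining $L^2(G_\sigma,\iota)$ and the covariance already verified in Lemma~\ref{lem-twisted-compact} ensure that the computation descends consistently to $G$.

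For the consequence I would first note that $\Phi^\sigma\circ i_{C_0(G)}=i_B\circ\phi$ gives, under the identification $C_0(G)\rtimes_{(\rt,\iota^\sigma)}G\cong\K(L^2(G_\sigma,\iota))$ and extending $\Phi^\sigma$ to multipliers by nondegeneracy, the relation $W_B=(\Phi^\sigma\otimes\id_G)(W^\sigma)$. The cocycle identity for $W_B$ is then purely formal: since $\widehat{(\beta,\iota^\sigma)}$ is trivial on $i_B(B)\supseteq i_B(\phi(C_0(G)))$ one gets $(\widehat{(\beta,\iota^\sigma)}\otimes\id_G)(W_B)=(W_B)_{13}$, while the corepresentation identity $(\id\otimes\delta_{C^*(G)})(w_G)=(w_G)_{12}(w_G)_{13}$ for the comultiplication $u_g\mapsto u_g\otimes u_g$ yields $(\id\otimes\delta_{C^*(G)})(W_B)=(W_B)_{12}(W_B)_{13}$, so both sides of the cocycle relation equal $(W_B)_{12}(W_B)_{13}$. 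Finally, to see that $\epsilon=\Ad W_B\circ\widehat{(\beta,\iota^\sigma)}$ fixes $\Phi^\sigma\big(C_0(G)\rtimes_{(\rt,\iota^\sigma)}G\big)$, I would combine equivariance with the first statement: for $x$ in the twisted crossed product,
\[
\widehat{(\beta,\iota^\sigma)}(\Phi^\sigma(x))=(\Phi^\sigma\otimes\id_G)\big((W^\sigma)^*(x\otimes 1)W^\sigma\big)=W_B^*(\Phi^\sigma(x)\otimes 1)W_B,
\]
whence $\epsilon(\Phi^\sigma(x))=W_B W_B^*(\Phi^\sigma(x)\otimes 1)W_B W_B^*=\Phi^\sigma(x)\otimes 1$.

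The main obstacle is the second identity of the first paragraph: conjugating the right-translation unitaries $\rho^\sigma_{\tig}$ through the field $W^\sigma$ while keeping precise track of the modular function $\Delta$ and of the $\T$-equivariance that singles out $L^2(G_\sigma,\iota)$, and confirming that the homomorphism property of $u$ delivers exactly the factor $u_g$. Everything after that is formal cocycle and equivariance bookkeeping.
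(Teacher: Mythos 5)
Your proposal is correct and follows essentially the same route as the paper: the same reduction to the generators $M^\sigma(f)$ and $\rho^\sigma_{\tig}$, the same conjugation computation for $(W^\sigma)^*(\rho^\sigma_{\tig}\otimes 1)W^\sigma$ (the paper carries it out on the Hilbert module $L^2(G_\sigma,\iota)\otimes C^*(G)$, which is just your representation picture), and the statements about $W_B$ deduced by pushing everything through the equivariant map $\Phi^\sigma=\phi\rtimes G$. The only difference is one of detail: the paper dismisses the final claims as ``a direct consequence of equivariance,'' whereas you spell out the bookkeeping --- $W_B=(\Phi^\sigma\otimes\id_G)(W^\sigma)$, triviality of $\widehat{(\beta,\iota^\sigma)}$ on $i_B(B)$, and the corepresentation identity for $w_G$ --- which is exactly the intended argument.
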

\begin{proof}
It follows directly from the definition that, identifying $C_0(G)\rtimes_{(\rt,\iota^\sigma)}G$ with $\K(L^2(G_\sigma,\iota))$ as above, the dual coaction 
$\widehat{(\rt,\iota^\sigma)}$  on $\K(L^2(G_\sigma, \iota))$ is determined by the formulas
$$\widehat{(\rt,\iota^\sigma)}(M^\sigma(f))= M^\sigma(f)\otimes 1\quad\text{and}\quad \widehat{(\rt,\iota^\sigma)}(\rho^\sigma_{\tig})=\rho^\sigma_{\tig}\otimes u_g$$
for $f\in C_0(G)$ and $\tig\in G_\sigma$. 
Thus, to prove the lemma, we need to check the equations
$$M^\sigma(f)\otimes 1=(W^\sigma)^*(M^\sigma(f)\otimes 1)W^\sigma\quad\text{and} \quad\rho^\om_{\tig}\otimes u_g=(W^\sigma)^*(\rho^\sigma_{\tig}\otimes 1)W^\sigma$$
for all $f\in C_0(G)$ and all $\tig\in G_\sigma$.
The left equation is trivial since $W^\sigma$ commutes with $M^\sigma(f)\otimes 1$ for all $f\in C_0(G)$. For the right equation 
we identify $\M(\K(L^2(G_\sigma,\iota))\otimes C^*(G))$ with $\L( L^2(G_\sigma,\iota)\otimes C^*(G))$, the adjointable operators on the $C^*(G)$-Hilbert module
$L^2(G_\sigma,\iota)\otimes C^*(G)$, and then compute for any element
 $\xi\in L^2(G_\sigma,\iota)\otimes C^*(G)$ (viewed as a function $\xi:G_\sigma\to C^*(G)$):
\begin{align*}
\big((W^\sigma)^*(\rho^\sigma_{\tig}\otimes 1)W^\sigma\xi\big)(\tilde{t})&
=u_t^*\big(\rho^\sigma_{\tig}\otimes 1)W^\sigma\xi\big)(\tilde{t})\\
&=\sqrt{\Delta(g)}u_t^*(W^\sigma\xi)(\tilde{t}\tig)\\
&=\sqrt{\Delta(g)}u_t^*u_{tg}\xi(\tilde{t}\tig)\\
&=\sqrt{\Delta(g)}u_{g}\xi(\tilde{t}\tig)\\
&=\big((\rho^\sigma_{\tig}\otimes u_g)\xi\big)(\tilde{t}).
\end{align*}
The result follows.
The last statement is now a direct consequence of  the $\widehat{(\rt,\iota^\sigma)}-\widehat{(\beta, \iota^\sigma)}$ equivariance of 
$\Phi^\sigma:C_0(G)\rtimes_{(\rt,\iota^\sigma)}G\to \M(B\rtimes_{(\beta,\iota^\sigma)}G)$.
\end{proof}

Now, if we identify $C_0(G)\rtimes_{(\rt,\iota^\sigma)}G$ with $\K(L^2(G_\sigma,\iota))$, Fischer's methods as explained in \S \ref{subsec-Fischer} imply a decomposition of
the crossed  product $B\rtimes_{(\beta,\iota^\sigma)}G$ as a tensor product
$D_{\max}^\sigma\otimes \K(L^2(G_\sigma,\iota))$ with 
\begin{equation}
\label{eq-Amax}
D_{\max}^\sigma := 
\left\{ 
m \in \M(B \rtimes_{(\beta,\iota^\sigma)} G) \mid  
\begin{array}{l}
    \Phi^\sigma(k) m = m \Phi^\sigma(k) \in B \rtimes_{(\beta,\iota^\sigma)} G, \\
    \quad\mbox{for all}\quad k \in \K(L^2(G_\sigma,\iota))
\end{array}
\right\}
\end{equation}
such that the coaction 
$\epsilon=\Ad W_B\circ \widehat{(\beta,\iota^\sigma)}$ restricts to a well-defined coaction, say $\epsilon^\sigma_{\max}$, of $G$ on $D^\sigma_\max$. By the same reasoning, if we replace 
$B\rtimes_{(\beta,\iota^\sigma)}G$ with an exotic version 
    $B\rtimes_{(\beta,\iota^\sigma),\mu}G$ for some duality crossed-product functor $\rtimes_\mu$,
    we obtain a $\mu$-coaction $(D^\sigma_\mu,\epsilon^\sigma_\mu)$ by applying 
    Proposition \ref{prop-max-to-mu} to the maximal coaction $(D^\sigma_\max,\epsilon^\sigma_\max)$.\footnote{A priori, a crossed-product functor $\rtimes_\mu$ is not defined for twisted actions. But for a duality crossed-product functor $\rtimes_\mu$ we can define 
    $\big(B\rtimes_{(\beta,\iota^\sigma),\mu}G, \widehat{(\beta,\iota^\sigma)}_\mu\big)$
    as the $\mu$-ization of the maximal coaction 
     $\big(B\rtimes_{(\beta,\iota^\sigma)}G, \widehat{(\beta,\iota^\sigma)}\big)$ as 
     in Proposition \ref{prop-max-to-mu}. }

\begin{notation}\label{note-Fischerdeformation}
Starting above with $(B,\beta,\phi)=(A_\mu\rtimes_{\delta_\mu}\hatG, \hatdelta_\mu, j_{C_0(G)})$ for some $\mu$-coaction $(A_\mu,\delta_\mu)$, we call 
$(D^\sigma_\mu,\delta^\sigma_\mu)$ the {\em Fischer deformation} of $(A_\mu,\delta_\mu)$.
\end{notation}

\begin{remark}\label{rem-BNS}
If $\om\in Z^2(G,\T)$ is a Borel cocycle, let $\sigma_\om:=(\T\into G_\om\onto G)$ 
denote the associated central extension in which $G_\om=G\times \T$ (as a Borel space) 
equipped with the multiplication $(g, z)(t, w)=(gt, \om(g,t)zw)$.
The associated Green-twisted crossed products $C_0(G)\rtimes_{(\rt,\iota^{\sigma_\om})}G$
and $B\rtimes_{(\beta, \iota^{\sigma_\om})}G$ are 
then isomorphic to the more measure theoretic Busby-Smith crossed 
products $C_0(G)\rtimes_{\rt,\om}G$ and $B\rtimes_{\beta,\om}G$, respectively,
as used by Bhowmick, Neshveyev, and Sangha in \cite{BNS}.

To see the connection, let $\sigma=(\T\into G_\sigma\onto G)$ be any twist for $G$ and let us choose a Borel section $\mathfrak{s}:G\to G_\sigma$ for the quotient map $G_\sigma\stackrel{q}{\onto}G$. Then 
$$\om:G\times G\to \T; \om(g,t)=\mathfrak{s}(g)\mathfrak{s}(t)\mathfrak{s}(gt)^{-1}$$
is a corresponding cocycle  whose class $[\om]\in H^2(G,\T)$ classifies $\sigma$
(starting with $\om$ as above, we can recover $\om$ from $\sigma_\om$ via the section $\mathfrak{s}(g)=(g,1)\in G_{\om}$).
Following the construction of Green's twisted crossed product
as given in \cite{Green}*{p.~197}, we obtain
$B\rtimes_{(\beta,\iota^\sigma)}G$
as a completion of the convolution algebra
\begin{equation}\label{eq-Green-twisted-cp}
C_c(G_\sigma,B, \iota^\sigma)=\{f:G_\sigma\to B: f(z \tig )=f(\tig)\bar{z}\} 
\end{equation}
with convolution and involution given by the formulas
\begin{equation}\label{Green-twisted-conv}
f*_\sigma h(\tig)=\int_G f(\tilde{t})\beta_t(h(\tilde{t}^{-1}\tig))\, dt
\quad\text{and}\quad f^*(\tig)=\Delta(g^{-1})\beta_g(f(\tig^{-1}))^*
\end{equation}
 On the other hand, the Busby-Smith twisted crossed product $B\rtimes_{\beta,\om}G$ is a completion of the convolution algebra 
 $L^1(G, B,\om)$, that is $L^1(G,B)$ with convolution and involution given by
 \begin{equation}\label{eq-Busby-Smith}
     f*_\om h(g)=\int_G f(t)\beta_t(h(t^{-1}g))\om(t, t^{-1}g)\,dt \;\;\text{and}\;\;
     f^*(g)=\Delta(g^{-1})\overline{\om(g, g^{-1})}f(g^{-1})^*.
 \end{equation}
 It is then straightforward to check that $\Phi:C_c(G_\sigma, B,\iota^\sigma)\to L^1(G,B,\om); \Phi(f)= f\circ \mathfrak{s}$ extends to a $*$-isomorphism 
 $B\rtimes_{(\beta,\iota^\sigma)}G\cong B\rtimes_{\beta,\om}G$ which is equivariant for the dual coactions (and similarly for $C_0(G)\rtimes_{(\rt,\iota^\sigma)}G\cong C_0(G)\rtimes_{\rt,\om}G$) and  intertwines the inclusions $C_0(G)\rtimes_{(\rt,\iota^\sigma)}G\cong C_0(G)\rtimes_{\rt,\om}G$, respectively.

Hence, by Remark \ref{rem-Fischer-functorial},  all constructions above can be done as well
in terms of Busby-Smith crossed products and their dual coactions.
This yields deformed coactions
$(D^\om_\mu,\delta^\om_\mu)$ which, by  Remark \ref{rem-Fischer-functorial}, are isomorphic to the deformed coactions $(D^{\sigma_\om}_\mu,\epsilon^{\sigma_\om}_\mu)$
for the twist $\sigma_\om$ as above.

Starting then with the weak $G\rtimes G$-algebra 
$(B,\beta,\phi)=(A\rtimes_\delta \hatG, \hatdelta, j_{C_0(G)})$ for a {\em normal} coaction $(A,\delta)$ of $G$, it follows from the proof of 
\cite{BNS}*{Theorem 3.4} that the deformed algebra $A^\om$ as in \cite{BNS}
coincides with the commutator algebra $C(E,\iota(\K(L^2(G))))$ of Fischer's construction
with $E=B\rtimes_{\beta,\om, r}G$, the reduced twisted crossed product, and inclusion $\iota=\phi\rtimes_\om G:\K(L^2(G))\cong C_0(G)\rtimes_{\rt,\om}G\to \M(E)$ given via the canonical map, if we identify $B\rtimes_{\beta,\om, r}G$ with 
$\theta(A\rtimes_\delta \hatG\rtimes_{\hatdelta, \om, r}G)$ as in the proof of 
\cite{BNS}*{Theorem 3.4}. Thus $A^\om$ coincides with the Fischer deformed algebra
$D^{\om}_r$ with respect to the reduced crossed product as introduced above.  
We leave it to the reader to check that the 
coaction $\epsilon^\om_r$ on $D^\om_r$ also coincides with the coaction 
$\delta^\om$ on $A^\om$ as constructed in \cite{BNS}*{Theorem 4.1}.
\end{remark}

\section{Comparison of the deformation procedures}
In this section we want to show that for any $\mu$-coaction $(A_\mu,\delta_\mu)$ for a duality 
crossed product functor $\rtimes_\mu$ and for any twist $\sigma=(\T\into G_\sigma\onto G)$
the deformed cosystem $(A^\sigma_\mu,\delta^\sigma_\mu)$ of Notation \ref{note-deform}
coincides (up to isomorphism) with the Fischer deformation $(D^\sigma_\mu,\delta^\sigma_\mu)$ as in 
Notation \ref{note-Fischerdeformation}. In view of Proposition \ref{prop-max-to-mu} 
it suffices to show this for the maximal coactions $(A^\sigma_\max, \delta^\sigma_\max)$ and 
$(D^\sigma_\max,\epsilon^\sigma_\max)$. For the sake of brevity, we shall omit below the subscript ``$\max$'' and assume from now on that all our coactions (and crossed products) are maximal.

In order to prove the isomorphism $(A^\sigma, \delta^\sigma)\cong (D^\sigma, \epsilon^\sigma)$, we shall show that for any weak $G\rtimes G$-algebra $(B,\beta,\phi)$ 
there are isomorphisms 
$$C_0(G)\rtimes_{(\rt,\iota^\sigma)}G\cong C_0(G)\rtimes_\rt G\quad\text{and}\quad 
B\rtimes_{(\beta,\iota^\sigma)}G\cong B^\sigma\rtimes_{\beta^\sigma}G$$
which are equivariant for the respective dual coactions and intertwine the  inclusion
$\phi\rtimes G:C_0(G)\rtimes_{(\rt,\iota^\sigma)}G\to \M(B\rtimes_{(\beta,\iota^\sigma)}G)$ 
with the inclusion $\phi^\sigma\rtimes G:C_0(G)\rtimes_{\rt}G\to \M(B^\sigma\rtimes_{\beta^\sigma}G)$.
The result will then follow from the functoriality of Fischer's construction
(see Remark \ref{rem-Fischer-functorial}).

For the isomorphisms we shall use the following general observation, which makes use of the 
linking algebra $L(\X)=\left(\begin{smallmatrix}A&\X\\ \X^*& B\end{smallmatrix}\right)$ of an $A-B$ equivalence bimodule $_A\X_B$ together with the multiplier bimodule
$_{\M(A)}\M(\X)_{\M(B)}$ as studied in detail in \cite{ER} or \cite{EKQR}.
Recall, in particular, the equation 
$$\M(L(\X))=L(\M(\X))=\left(\begin{matrix} \M(A)&\M(\X)\\ \M(\X)^*& \M(B)\end{matrix}\right).$$
Recall further that if $\gamma:G_\sigma\car\X$ is an action of $G_\sigma$ on $\X$ which implements
an $(\alpha,\tau)-(\beta,\nu)$ equivariant Morita equivalence for twisted actions 
$(\alpha,\tau):(G_\sigma,\T)\car A$ and $(\beta,\nu):(G_\sigma, \T)\car B$, then 
they induce the twisted action
$$\left(\left(\begin{smallmatrix}\alpha&\gamma\\ \gamma^*&\beta\end{smallmatrix}\right),
\left(\begin{smallmatrix} \tau&0\\ 0&\nu\end{smallmatrix}\right)\right):(G_\sigma,\T)\car L(\X)$$
with Green-twisted crossed product
$$L(\X)\rtimes G:=L(\X)\rtimes_{\left(\left(\begin{smallmatrix}\alpha&\gamma\\ \gamma^*&\beta\end{smallmatrix}\right),
\left(\begin{smallmatrix} \tau&0\\ 0&\nu\end{smallmatrix}\right)\right)}G.$$ 
Taking corners with respect to the images $p,q\in \M(L(\X)\rtimes G)$ of the opposite full projections 
$p=\left(\begin{smallmatrix} 1&0\\0&0\end{smallmatrix}\right)_{L(\X)}$ and
$q=\left(\begin{smallmatrix} 0&0\\0&1\end{smallmatrix}\right)_{L(\X)}$, we see that 
$\X\rtimes_\gamma G:= p(L(\X)\rtimes_\gamma G)q$ becomes an imprimitivity bimodule 
for $A\rtimes_{(\alpha,\tau)}G\cong p(L(\X)\rtimes_\gamma G)p$ and 
$B\rtimes_{(\beta,\nu)}G=q(L(\X)\rtimes_\gamma G)q$. In particular, we obtain an identification
$$L(\X)\rtimes G\cong L(\X\rtimes_\gamma G).$$
Observe also, that the dual coaction of $G$ on $L(\X)\rtimes G$ compresses to 
the dual coactions $\widehat{(\alpha,\tau)}$, $\widehat{\gamma}$, and $\widehat{(\beta,\nu)}$ 
on the corners $A\rtimes_{(\alpha,\tau)}G$, $\X\rtimes_\gamma G$, and $B\rtimes_{(\beta,\nu)}G$, respectively, 
making  $(\X\rtimes_\gamma G,\widehat\gamma)$ a 
$(A\rtimes_{(\alpha,\tau)}G, \widehat{(\alpha,\tau)})-(B\rtimes_{(\beta,\nu)}G, \widehat{(\beta,\nu)})$
Morita equivalence, as studied in detail in \cite{EKQR}.

As a key towards the construction of our desired isomorphism, we shall use the following

 \begin{lemma}\label{lem-single}
Let $\X$ be an $A-B$ equivalence bimodule and suppose that $S\in \M(\X)$ such that $S^*S=1_{\M(B)}$ and 
$SS^*=1_{\M(A)}$. Then  $B\cong SBS^*=A$  via $b\mapsto SbS^*$. 
Here all multiplications are inside the linking algebra $L(\M(\X))$. 

If, in addition, $\delta_\X:\X\to \M(\X\otimes C^*(G))$ is a coaction of $G$ on $\X$ which implements a
Morita equivalence between the coactions $(A,\delta_A)$ and $(B,\delta_B)$, and such that 
$$\delta_\X(S)=S\otimes 1$$
(using the unique extension of $\delta_\X$ to $\M(\X)$)
then the above isomorphism $\Ad S: B\congto A$ is $\delta_B-\delta_A$ equivariant.
\end{lemma}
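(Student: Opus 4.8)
The plan is to realize $S$ as a partial isometry inside the linking algebra and then to read off both assertions from the multiplicativity of the coaction induced on $L(\X)$.

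First I would work inside $\M(L(\X))=L(\M(\X))$, writing $S=\left(\begin{smallmatrix}0&S\\ 0&0\end{smallmatrix}\right)$ and hence $S^*=\left(\begin{smallmatrix}0&0\\ S^*&0\end{smallmatrix}\right)$. The hypotheses say precisely that $S$ is a partial isometry with source projection $q=\left(\begin{smallmatrix}0&0\\0&1\end{smallmatrix}\right)$ and range projection $p=\left(\begin{smallmatrix}1&0\\0&0\end{smallmatrix}\right)$. A direct matrix computation in $L(\M(\X))$ shows that $\Ad S$ maps $B$ into the upper-left corner; moreover, writing $b=c^*c$ gives $SbS^*=(Sc^*)(Sc^*)^*\in\X\cdot\X^*\subseteq A$ (using $\M(\X)\cdot B\subseteq\X$), so by linearity $\Ad S$ actually maps $B$ into $A$. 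That $\Ad S$ is a $*$-homomorphism is immediate by inserting the relations in the right places, e.g.\ $Sb_1b_2S^*=(Sb_1S^*)(Sb_2S^*)$ because $b_1b_2=b_1(S^*S)b_2$; and $\Ad S^*\colon A\to B$ is a two-sided inverse using $S^*S=1_{\M(B)}$ and $SS^*=1_{\M(A)}$. Hence $\Ad S\colon B\congto A$ is a $*$-isomorphism, and in particular $SBS^*=A$, which is the first assertion.

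For the second assertion I would invoke the coaction $\delta_L$ of $G$ on $L(\X)$ assembled from the equivariant Morita data $(\delta_A,\delta_\X,\delta_B)$; by the theory of coactions on imprimitivity bimodules (see \cite{EKQR}) this $\delta_L$ is a genuine coaction that restricts on the three corners of $L(\X)$ to $\delta_A$, $\delta_\X$ and $\delta_B$, and extends multiplicatively and strictly continuously to $\M(L(\X))$. The one nontrivial input is the hypothesis $\delta_\X(S)=S\otimes1$, which in corner form reads $\delta_L(S)=S\otimes 1$ and therefore $\delta_L(S^*)=S^*\otimes 1$. Then, for $b\in B$, multiplicativity of $\delta_L$ gives
$$\delta_L(SbS^*)=\delta_L(S)\,\delta_L(b)\,\delta_L(S^*)=(S\otimes 1)\,\delta_B(b)\,(S^*\otimes 1)=\Ad(S\otimes 1)\big(\delta_B(b)\big).$$
Since $\Ad(S\otimes 1)$ agrees with $\Ad S\otimes\id_{C^*(G)}$ on elementary tensors and both are strictly continuous, they coincide on all of $\M(B\otimes C^*(G))$, so the right-hand side equals $(\Ad S\otimes\id)(\delta_B(b))$. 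On the other hand $SbS^*\in A$ and $\delta_L$ restricts to $\delta_A$ on the upper-left corner, so the left-hand side equals $\delta_A(\Ad S(b))$. Comparing the two gives $\delta_A\circ\Ad S=(\Ad S\otimes\id)\circ\delta_B$, i.e.\ $\Ad S$ is $\delta_B$-$\delta_A$ equivariant.

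The only delicate point — and thus the expected main obstacle — is the bookkeeping with the multiplier extensions: one must know that $\delta_\X$ extends uniquely to $\M(\X)$ and assembles with $\delta_A,\delta_B$ into a corner-respecting coaction $\delta_L$ on $L(\X)$, and one must justify $\Ad(S\otimes 1)=\Ad S\otimes\id$ at the multiplier level through a strict-density argument. Once these standard facts about coactions on linking algebras are granted, both statements collapse to the elementary matrix manipulations above.
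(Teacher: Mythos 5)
Your proposal is correct and follows essentially the same route as the paper: the heart of both arguments is the identical computation $\delta_A(SbS^*)=\delta_\X(S)\delta_B(b)\delta_\X(S^*)=(S\otimes 1)\delta_B(b)(S^*\otimes 1)=(\Ad S\otimes\id)(\delta_B(b))$, justified by multiplicativity of the coaction on the linking algebra. The paper simply dismisses the first assertion as straightforward and leaves the linking-algebra bookkeeping and the identification $\Ad(S\otimes 1)=\Ad S\otimes\id$ implicit, whereas you spell these details out.
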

\begin{proof}
    The first assertion is straightforward, so we restrict to the second. So assume that 
    $\delta_\X(S)=S\otimes 1$. We then get for all $b\in B$:
    \begin{align*}
        \delta_A(SbS^*)&=\delta_\X(S)\delta_B(b)\delta_\X(S^*)\\
        &=(S\otimes 1)\delta_B(b)(S^*\otimes 1)\\
        &=\Ad S\otimes \id(\delta_B(b)),
    \end{align*}
    which is what we want.
    
\end{proof}

We want to apply this lemma first to the 
$C_0(G)\rtimes_{\rt}G-C_0(G)\rtimes_{(\rt,\iota^\sigma)}G$ equivalence bimodule 
$C_0(G_\sigma,\bar\iota)\rtimes_{\rt^\sigma}G$, the crossed product of the $\rt-(\rt,\iota^\sigma)$ equivariant 
$C_0(G)-C_0(G)$ equivalence $\rt^\sigma:G_\sigma\car C_0(G_\sigma,\bar\iota)$
as introduced in (\ref{eq-C0Giota)}) above.  We shall see below that this module admits 
an isomorphic representation as compact operators between Hilbert spaces.
For this recall from \cite{ER} that an imprimitivity-bimodule representation of an $A-B$-equivalence bimodule $\X$ on a pair of Hilbert spaces $(\H,\H')$ is a triple of linear maps
$$(\pi_A,\pi_\X,\pi_B):(A,\X, B)\to \big(\B(\H), \B(\H', \H), \B(\H')\big)$$
such that $\pi_A:A\to \B(\H), \pi_B:B\to \B(\H')$ are $*$-homomorphisms, and $\pi_\X:\X\to \B(\H',\H)$
is compatible with the canonical $\B(\H)-\B(\H')$ Hilbert-bimodule structure on $\B(\H',\H)$.
It is observed in \cite{ER}*{\S 2, Remarks (2)} that faithfulness of any of the maps in the triple 
$(\pi_A,\pi_\X,\pi_B)$ implies faithfulness of all the others. 
Notice that every imprimitivity-bimodule representation as above induces the representation 
$\left(\begin{smallmatrix}\pi_A&\pi_{\X}\\ \pi_\X^*&\pi_B\end{smallmatrix}\right)$ of the linking algebra $L(\X)=\left(\begin{smallmatrix} A& \X\\ \X^* & B\end{smallmatrix}\right)$ 
acting via matrix multiplication on $\left\{\left(\begin{smallmatrix} \xi\\ \eta\end{smallmatrix}\right):\xi\in \H,\eta\in \H'\right\}\cong \H\oplus\H'$.

Recall now that we have faithful representations 
$M\rtimes\rho:C_0(G)\rtimes_{\rt}G\to \K(L^2(G))$ and 
$M^\sigma\rtimes\rho^\sigma: C_0(G)\rtimes_{(\rt,\iota^\sigma)}G\to \K(L^2(G_\sigma,\iota))$.
Define $L^\sigma:C_0(G_\sigma,\bar\iota)\to \B(L^2(G_\sigma, \iota), L^2(G))$ by
\begin{equation}\label{eq-Lsigma}
L^\sigma(f)\xi(g)=f(\tig)\xi(\tig)\quad\forall f\in C_0(G_\sigma, \bar\iota),\xi\in L^2(G_\sigma,\iota),
\tig\in G_{\sigma}.
\end{equation}
It is then straightforward to check that $(M,L^\sigma,M^\sigma)$ is an imprimitivity bimodule 
representation of $_{C_0(G)}C_0(G_\sigma,\bar\iota)_{C_0(G)}$ on the pair of Hilbert spaces $(L^2(G), L^2(G_\sigma,\iota))$ such that the pair $\left(\left(\begin{smallmatrix} M& L^\sigma\\ (L^\sigma)^*& M^\sigma\end{smallmatrix}\right), \left(\begin{smallmatrix} \rho&0\\0&\rho^\sigma\end{smallmatrix}\right)\right)$ 
becomes a covariant representation for the twisted action 
$$(\Rt, \tau):=\left(\left(\begin{smallmatrix} \rt &\rt^\sigma\\ (\rt^\sigma)^* &\rt\end{smallmatrix}\right),
\left(\begin{smallmatrix} 1_\T&0\\0& \iota^\sigma\end{smallmatrix}\right)\right):(G_\sigma,\T)\car L(C_0(G_\sigma,\bar\iota)).$$

The representation therefore integrates to a \Star-representation, say $\Phi_L$, 
of the twisted crossed product 
$L(C_0(G_\sigma,\bar\iota))\rtimes_{(\Rt,\tau)} G\cong L\big(C_0(G_\sigma,\bar\iota)\rtimes_{\rt^\sigma}G\big)$
into $\B\big(L^2(G)\oplus L^2(G_\sigma,\bar\iota)\big)$. 
Since its compression to the upper left full corner $C_0(G)\rtimes_\rt G\cong\K(L^2(G))$ is irreducible,
it follows that the image of $\Phi_L$ is just the compact operators on $L^2(G)\oplus L^2(G_\sigma,\iota)$.
Compression of 
this representation to the upper left, upper right, and lower right corners 
 then yields the desired faithful imprimitivity bimodule representation 
$$(M\rtimes\rho, L^\sigma\rtimes\rho^\sigma, M^\sigma\rtimes \rho^\sigma)$$
of  $_{C_0(G)\rtimes_\rt G}\big(C_0(G_\sigma,\bar\iota)\rtimes_{\rt^\sigma}G\big)_{C_0(G)\rtimes_{(\rt,\iota^\sigma)}G}$ on   $\big(L^2(G), L^2(G_\sigma,\iota)\big)$
such that 
\begin{equation}\label{eq-imp-iso}
  L^\sigma\rtimes\rho^\sigma:   C_0(G_\sigma,\bar\iota)\rtimes_{\rt^\sigma}G\congto \K(L^2(G_\sigma, \iota), L^2(G)).
\end{equation}

Summarizing the above, we now get the following. 

\begin{proposition}\label{prop-imp-iso}
The representation $(M\rtimes\rho, L^\sigma\rtimes\rho^\sigma, M^\sigma\rtimes \rho^\sigma)$ identifies 
the $C_0(G)\rtimes_{\rt}G-C_0(G)\rtimes_{(\rt,\iota)}G$ imprimitivity bimodule $C_0(G_\sigma, \bar\iota)\rtimes_{\rt^\sigma}G$ with the $\K(L^2(G))-\K(L^2(G_\sigma,\iota))$ imprimitivity bimodule 
$\K(L^2(G), L^2(G_\sigma,\iota))$ and, therefore,  it extends to an isomorphism of multiplier bimodules
\begin{equation}\label{eq-mult-iso}
\begin{split}_{\M(C_0(G)\rtimes_\rt G)}\M(C_0(G_\sigma,\iota)\rtimes_{\rt^\sigma}&G)_{\M(C_0(G)\rtimes_{(\rt,\iota^\sigma)}G)}\\
&\cong_{\B(L^2(G))}\B(L^2(G_\sigma,\iota), L^2(G))_{\B(L^2(G_\sigma, \iota))}
\end{split}
\end{equation}
\end{proposition}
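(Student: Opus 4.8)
The plan is to treat the proposition as a packaging of the constructions established immediately above it, so that only the passage from the equivalence bimodule to the multiplier bimodule requires a genuinely new argument.

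First I would record the identification of equivalence bimodules. The integrated representation $\Phi_L$ of the linking-algebra crossed product $L(C_0(G_\sigma,\bar\iota))\rtimes_{(\Rt,\tau)}G\cong L\big(C_0(G_\sigma,\bar\iota)\rtimes_{\rt^\sigma}G\big)$ has already been shown to have image exactly $\K\big(L^2(G)\oplus L^2(G_\sigma,\iota)\big)$, using the irreducibility of its upper-left corner. Compressing $\Phi_L$ by the orthogonal projections onto $L^2(G)$ and $L^2(G_\sigma,\iota)$ (the images of $p$ and $q$) yields the triple $(M\rtimes\rho,\,L^\sigma\rtimes\rho^\sigma,\,M^\sigma\rtimes\rho^\sigma)$, whose three components map onto $\K(L^2(G))$, $\K(L^2(G_\sigma,\iota),L^2(G))$, and $\K(L^2(G_\sigma,\iota))$ respectively, the middle one being exactly~\eqref{eq-imp-iso}. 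By the faithfulness remark of \cite{ER}*{\S 2, Remarks (2)}, faithfulness of $M\rtimes\rho$ (an isomorphism onto $\K(L^2(G))$, as recalled above) forces faithfulness of all three, so the triple is an isometric imprimitivity-bimodule isomorphism onto the compact-operator bimodule of~\eqref{eq-imp-iso}.

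Second, for the multiplier statement~\eqref{eq-mult-iso}, I would appeal to functoriality of the multiplier/linking-algebra construction rather than argue componentwise. Since $\Phi_L$ is a $*$-isomorphism of $C^*$-algebras onto $\K\big(L^2(G)\oplus L^2(G_\sigma,\iota)\big)$, it extends uniquely to a strictly continuous isomorphism of multiplier algebras onto $\B\big(L^2(G)\oplus L^2(G_\sigma,\iota)\big)$. Writing $\Y:=C_0(G_\sigma,\bar\iota)\rtimes_{\rt^\sigma}G$, the source multiplier algebra is $\M\big(L(\Y)\big)=L\big(\M(\Y)\big)$ by the identity $\M(L(\cdot))=L(\M(\cdot))$ quoted in the excerpt, while the target is $\B\big(L^2(G)\oplus L^2(G_\sigma,\iota)\big)=L\big(\B(L^2(G_\sigma,\iota),L^2(G))\big)$. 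Reading off and matching the corners of these two linking-algebra decompositions then yields~\eqref{eq-mult-iso} on the off-diagonal, together with the expected identifications $\B(L^2(G))$ and $\B(L^2(G_\sigma,\iota))$ on the diagonal.

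The one point that needs care, and what I would flag as the main obstacle, is that this multiplier extension respects the two linking-algebra decompositions, i.e.\ that the strict extension of $\Phi_L$ carries the corner projections $p,q$ to the orthogonal projections onto $L^2(G)$ and $L^2(G_\sigma,\iota)$. This is essentially automatic: $p$ and $q$ already lie in $\M\big(L(\Y)\big)$ as the units of the diagonal corners and are implemented by those orthogonal projections under $\Phi_L$ itself, and the strict extension of an isomorphism onto the compacts is just the ambient isomorphism of bounded operators, which fixes any projection already in its range. Hence the decomposition is preserved and~\eqref{eq-mult-iso} follows.
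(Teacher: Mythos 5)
Your proposal is correct and takes essentially the same route as the paper: the proposition there is stated as a summary of the preceding construction (the integrated linking-algebra representation $\Phi_L$ having image the compacts on $L^2(G)\oplus L^2(G_\sigma,\iota)$, compression to corners, and faithfulness via \cite{ER}*{\S 2, Remarks (2)}), with the multiplier statement following from the standard extension theory of \cite{ER}/\cite{EKQR} via $\M(L(\X))=L(\M(\X))$. Your explicit check that the strict extension of $\Phi_L$ carries the corner projections $p,q$ to the orthogonal projections onto $L^2(G)$ and $L^2(G_\sigma,\iota)$ is a correct elaboration of a point the paper leaves implicit.
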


We now let $ U:L^2(G_\sigma,\iota)\to L^2(G)$ be any unitary isomorphism. Then its preimage $S\in \M(C_0(G_\sigma,\bar\iota)\rtimes_{\rt^\sigma}G)$
 satisfies the requirements of Lemma \ref{lem-single} above, and therefore induces an isomorphism 
$C_0(G)\rtimes_{\rt}G\cong C_0(G)\rtimes_{(\rt,\iota^\sigma)}G$. We want to choose $S$ in a way that makes this isomorphism 
equivariant with respect to the dual coactions. For this let us choose any Borel function $\varphi:G_\sigma\to \T$ which satisfies 
\begin{equation}\label{eq-varphi}
\varphi(\tig z)=\bar{z}\varphi(\tig)\quad\forall \tig \in G_\sigma, z\in \T.
\end{equation}
Note that any Borel section $\mathfrak s:G\to G_\sigma$ allows the construction of such function $\varphi$ by putting 
$$\varphi(\tig)=\bar{z}\quad\text{iff}\quad \tig =\mathfrak s(g)z.$$
Now define 
$$ L_\varphi: L^2(G_\sigma,\iota)\to L^2(G);\quad L_\varphi (\xi)=\varphi\cdot\xi,$$
and let us denote by $S_\varphi\in \M(C_0(G_\sigma,\bar\iota)\rtimes_{\rt^\sigma}G)$ its inverse image under the isomorphism 
(\ref{eq-mult-iso}). 

Recall from Lemma \ref{lem-onecocycle} that the isomorphisms $M\rtimes \rho: C_0(G)\rtimes_\rt G\congto \K(L^2(G))$ and 
 $M^\sigma\rtimes \rho^\sigma: C_0(G)\rtimes_{(\rt,\iota^\sigma)} G\congto \K(L^2(G_\sigma,\iota))$ transform the dual coactions
 to the coactions
 \begin{equation}\label{eq-dual-coaction-K}
     \begin{split}
\delta_{\K(L^2(G))}:\M(\K(L^2(G))\to C^*(G)); \;\;& k\mapsto W^*(k\otimes 1)W, \quad\text{and}\\
     \delta_{\K(L^2(G_\sigma,\iota))}:\M(\K(L^2(G_\sigma,\iota))\to C^*(G));\;\; &   k\mapsto (W^\sigma)^*(k\otimes 1)W^\sigma
     \end{split}
 \end{equation}
 for the unitaries $W=(M\otimes \id)(w_g)\in U\M(\K(L^2(G))\otimes C^*(G))$ and 
 $W^\sigma=(M^\sigma\otimes \id)(w_G)\in U\M(\K(L^2(G_\sigma,\iota))\otimes C^*(G))$, respectively.
Similarly, using the restriction of the representation $\left(\left(\begin{smallmatrix} M& L^\sigma\\ (L^\sigma)^*& M^\sigma\end{smallmatrix}\right)\rtimes \left(\begin{smallmatrix} \rho&0\\0&\rho^\sigma\end{smallmatrix}\right)\right)$ to $C_0(G_\sigma,\bar\iota)\rtimes_{\rt^\sigma}G$, a
similar computation as in the proof of Lemma \ref{lem-onecocycle} shows
that the dual coaction on $C_0(G_\sigma,\bar\iota)\rtimes_{\rt^\sigma}G$ transforms to the coaction 
$$k\mapsto W^*(k\otimes 1)W^\sigma$$ on $\K(L^2(G_\sigma,\iota), L^2(G))$.

\begin{lemma}\label{lem-equiv-iso}
Let $L_\varphi: L^2(G_\sigma,\iota)\to L^2(G)$ and  $S_\varphi\in \M(C_0(G)\rtimes_{(\rt,\iota^\sigma)}G)$ be as above. Then
\begin{equation}\label{eq-W-imp}
    W^*(L_\varphi\otimes 1) W^\sigma= L_\varphi\otimes 1.
\end{equation}
As a consequence, the preimage $S_\varphi\in \M(C_0(G_\sigma,\bar\iota)\rtimes_{\rt^\sigma}G)$ of $L_\varphi$ under the isomorphism $\M(C_0(G_\sigma,\bar\iota)\rtimes_{\rt^\sigma}G)\cong \B(L^2(G_\sigma,\iota), L^2(G))$ is fixed by the dual coaction $\widehat{\rt^\sigma}$ and the
isomorphism $\Ad S_\varphi: C_0(G)\rtimes_{(\rt,\iota^\sigma)}G\congto C_0(G)\rtimes_\rt G$ of Lemma \ref{lem-single} is equivariant
for the dual coactions.
\end{lemma}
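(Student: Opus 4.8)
The plan is to establish the operator identity \eqref{eq-W-imp} by a direct computation on the $C^*(G)$-Hilbert module $L^2(G_\sigma,\iota)\otimes C^*(G)$, and then to deduce both conclusions from it: the invariance of $S_\varphi$ by transporting \eqref{eq-W-imp} through the identification of Proposition \ref{prop-imp-iso}, and the equivariance of $\Ad S_\varphi$ by invoking Lemma \ref{lem-single}.

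First I would unwind the three operators as actions on functions. Viewing an element $\xi\in L^2(G_\sigma,\iota)\otimes C^*(G)$ as a map $\xi\colon G_\sigma\to C^*(G)$, the definitions of $W^\sigma$, $W$ and $L_\varphi$ give $(W^\sigma\xi)(\tig)=u_g\xi(\tig)$, while for $\eta\colon G\to C^*(G)$ one has $(W^*\eta)(g)=u_g^*\eta(g)$, and $\big((L_\varphi\otimes 1)\xi\big)(g)=\varphi(\tig)\xi(\tig)$. The last expression is well defined on $G$ precisely because the covariance \eqref{eq-varphi} of $\varphi$ cancels the covariance $\xi(\tig z)=z\xi(\tig)$ of elements of $L^2(G_\sigma,\iota)$. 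Composing the three, I would compute
\[
\big(W^*(L_\varphi\otimes 1)W^\sigma\xi\big)(g)=u_g^*\,\varphi(\tig)\,u_g\,\xi(\tig).
\]
Since $\varphi(\tig)\in\T\subseteq\C$ is a scalar, it commutes with $u_g$, so the two factors $u_g^*$ and $u_g$ cancel and one is left with $\varphi(\tig)\xi(\tig)=\big((L_\varphi\otimes 1)\xi\big)(g)$. This is exactly \eqref{eq-W-imp}; the only inputs are that $\varphi$ is scalar-valued and unimodular, the latter also guaranteeing that $L_\varphi$ is a unitary with $L_\varphi^*L_\varphi=1$ and $L_\varphi L_\varphi^*=1$, so that $S_\varphi$ is the unitary multiplier $S$ of the preceding discussion.

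Next I would transport this back through Proposition \ref{prop-imp-iso}. Recall from the computation recorded just before the lemma that, under the faithful imprimitivity-bimodule representation, the dual coaction $\widehat{\rt^\sigma}$ on $C_0(G_\sigma,\bar\iota)\rtimes_{\rt^\sigma}G$ becomes $k\mapsto W^*(k\otimes 1)W^\sigma$ on $\K(L^2(G_\sigma,\iota),L^2(G))$, extended to the multiplier level. Since $S_\varphi$ is by definition the preimage of $L_\varphi$, identity \eqref{eq-W-imp} says precisely that $\widehat{\rt^\sigma}(S_\varphi)$ corresponds to $L_\varphi\otimes 1$, that is $\widehat{\rt^\sigma}(S_\varphi)=S_\varphi\otimes 1$, which is the asserted invariance.

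Finally, the equivariance of $\Ad S_\varphi$ is a direct application of the second part of Lemma \ref{lem-single} with $\delta_\X=\widehat{\rt^\sigma}$: $S_\varphi$ is a unitary multiplier of the equivalence bimodule $C_0(G_\sigma,\bar\iota)\rtimes_{\rt^\sigma}G$, and I have just shown $\delta_\X(S_\varphi)=S_\varphi\otimes 1$, whence $\Ad S_\varphi\colon C_0(G)\rtimes_{(\rt,\iota^\sigma)}G\congto C_0(G)\rtimes_\rt G$ is $\widehat{(\rt,\iota^\sigma)}-\widehat{\rt}$ equivariant. The computation itself is elementary; the points requiring care are purely bookkeeping, namely extending the coactions and the bimodule identification to the multiplier level, and keeping track of which corner of the linking algebra carries $W$ versus $W^\sigma$ so that conjugation on the off-diagonal corner indeed comes out as $W^*(\,\cdot\,\otimes 1)W^\sigma$ and not with the two unitaries interchanged.
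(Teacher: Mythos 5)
Your proposal is correct and follows essentially the same route as the paper: a direct verification of \eqref{eq-W-imp} on the Hilbert module $L^2(G_\sigma,\iota)\otimes C^*(G)$ (you realize $W$, $W^\sigma$ as pointwise multiplication by $u_g$, the paper equivalently as translation in the second variable on elementary tensors $\xi\otimes x$ with $x\in C_c(G)$), followed by transporting the identity through the imprimitivity-bimodule representation of Proposition \ref{prop-imp-iso} to get $\widehat{\rt^\sigma}(S_\varphi)=S_\varphi\otimes 1$ and then invoking Lemma \ref{lem-single}. No gaps; the scalar-commutation step $u_g^*\varphi(\tig)u_g=\varphi(\tig)$ is exactly what makes the cancellation work, matching the paper's computation.
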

\begin{proof}
   For  $\xi\otimes \psi\in L^2(G_\sigma,\iota)\otimes C_c(G) \subseteq L^2(G_\sigma,\iota)\otimes C^*(G)$ we compute
   \begin{align*}
       \big(W^*(L_\varphi\otimes 1) W^\sigma(\xi\otimes x)\big)(g, t)&= \big((L_\varphi\otimes 1) W^\sigma(\xi\otimes x)\big)(g, gt)\\
       &=\varphi(\tig)\big(W^\sigma(\xi\otimes x)\big)(\tig, gt)\\
       &=\varphi(\tig)(\xi\otimes x)(\tig, t)\\
       &=\big((L_\varphi\otimes 1)(\xi\otimes x)\big)(g, t)
   \end{align*}
   This proves the equation (\ref{eq-W-imp}). In particular, it follows that the isomorphism 
   $\Ad L_\varphi: \K(L^2(G_\sigma,\iota))\congto \K(L^2(G))$ is equivariant for the respective coactions 
   as in (\ref{eq-dual-coaction-K}). Since $S_\varphi$ is the inverse image of $L_\varphi$ under the isomorphism 
   (\ref{eq-mult-iso}), the result follows.
\end{proof}

The above lemma will now easily implement a similar result for the $B^\sigma\rtimes_{\beta^\sigma}G-B\rtimes_{(\beta,\iota^\sigma)}G$ equivalence bimodule
$\E_\sigma(B)\rtimes_{\gamma}G$ and, similarly, for their exotic counter parts.
To prepare for this, we first observe that we have a $\rt^\sigma-\gamma$ equivariant linear map
$\psi: C_0(G_\sigma,\bar\iota)\to \M(\E_\sigma(B))\cong \Lb_B(B,\E_\sigma(B))$ given by
$$\psi(f)b:=f\otimes b\in C_0(G_\sigma,\bar\iota)\otimes_{C_0(G)}B=\E_\sigma(B).$$
The triple $(\phi^\sigma, \psi,\phi)$ then becomes a nondegenerate
$(G_\sigma,\T)$-equivariant imprimitivity bimodule map
$$_{C_0(G)}C_0(G_\sigma,\bar\iota)_{C_0(G)}\to _{\M(B^\sigma)}\M(\E_\sigma(B))_{\M(B)}$$
with a corresponding nondegenerate $*$-homomorphism 
$$\left(\begin{smallmatrix} \phi^\sigma& \psi\\ \psi^*&\phi\end{smallmatrix}\right):L(C_0(G_\sigma,\bar\iota))\to L(\M(\E_\sigma(B)))=\M(L(\E_\sigma(B))).$$
It then descents to a nondegenerate $*$-homomorphism
$$\left(\begin{smallmatrix} \phi^\sigma& \psi\\ \psi^*&\phi\end{smallmatrix}\right)\rtimes G:
L(C_0(G_\sigma,\bar\iota)\rtimes_{\rt^\sigma}G)\to \M(L(\E_\sigma(B)\rtimes_\gamma G))$$
mapping corners to corners and therefore decomposing to a matrix of maps
$$\left(\begin{smallmatrix} \phi^\sigma& \psi\\ \psi^*&\phi\end{smallmatrix}\right)\rtimes G
=:\left(\begin{smallmatrix} \phi^\sigma\rtimes G& \psi\rtimes G\\ (\psi\rtimes G)^*&\phi\rtimes G\end{smallmatrix}\right).$$
By nondegeneracy, it extends to  
$$\M(L(C_0(G_\sigma,\bar\iota)\rtimes_{\rt^\sigma}G))=L(\M(C_0(G_\sigma,\bar\iota)\rtimes_{\rt^\sigma}G))).$$
Since every descent of an equivariant $*$-homomorphism to the crossed products is equivariant for the dual coactions, we see that the map
$$\psi\rtimes G: \M(C_0(G_\sigma,\bar\iota)\rtimes_{\rt^\sigma}G)\to \M(\E_\sigma(B)\rtimes_\gamma G)$$
sends the element $S_\varphi\in \M(C_0(G_\sigma,\bar\iota)\rtimes_{\rt^\sigma}G)$ of 
Lemma \ref{lem-equiv-iso} to an element, say $R_\varphi\in \M(\E_\sigma(B)\rtimes_\gamma G)$ which satisfies 
all the requirements of Lemma \ref{lem-single}: 
we have 
 $$   R_\varphi^*R_\varphi=\phi(S_\varphi^*S_\varphi)=\phi(1_{C_0(G)})=1_B $$
and similarly $R_\varphi R_\varphi^*=1_{B^\sigma}$. Moreover, we have 
\begin{align*}
\widehat{\gamma}(R_\varphi)&=\widehat{\gamma}(\psi(S_\varphi))
=(\psi\otimes \id)\big(\widehat{\rt^\sigma}(S_\varphi)\big)\\
&=(\psi\otimes \id)\big(S_\varphi\otimes 1\big)
=R_\varphi\otimes 1
\end{align*}
Thus, applying Lemma \ref{lem-single} we now get

\begin{proposition}\label{prop-deformation-iso}
Let $(B,\beta,\phi)$ be a weak $G\rtimes G$-algebra and let $\sigma=(\T\into G_\sigma\onto G)$ be a twist for $G$. Then the element $R_\varphi\in \M(\E_\sigma(B)\rtimes_\gamma G)$ constructed above induces a $\widehat{(\beta,\iota^\sigma)}-\widehat{\beta^\sigma}$ equivariant $*$-isomorphism 
$$\Ad R_\varphi: B\rtimes_{(\beta,\iota^\sigma)}G\to B^\sigma\rtimes_{\beta^\sigma}G$$
such that following diagram commutes:
$$
\begin{CD}
    C_0(G)\rtimes_{(\rt,\iota^\sigma)}G @>\phi\rtimes G>> \M(B\rtimes_{(\beta,\iota^\sigma)}G)\\
    @V\Ad S_\varphi VV    @VV\Ad R_\varphi V\\
    C_0(G)\rtimes_\rt G  @>>\phi^\sigma\rtimes G> \M(B^\sigma\rtimes_{\beta^\sigma}G)
\end{CD}$$
\end{proposition}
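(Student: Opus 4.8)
The plan is to read off the isomorphism and its equivariance directly from Lemma~\ref{lem-single}, and then to obtain the commutativity of the square as a formal diagram chase inside the linking algebra. First I would note that the three hypotheses of Lemma~\ref{lem-single} have already been verified in the construction preceding the statement: the element $R_\varphi=(\psi\rtimes G)(S_\varphi)$ satisfies $R_\varphi^*R_\varphi=1_B$, $R_\varphi R_\varphi^*=1_{B^\sigma}$, and $\widehat\gamma(R_\varphi)=R_\varphi\otimes 1$. Since the discussion of the equivariant linking algebra shows that $(\E_\sigma(B)\rtimes_\gamma G,\widehat\gamma)$ implements a $(B^\sigma\rtimes_{\beta^\sigma}G,\widehat{\beta^\sigma})-(B\rtimes_{(\beta,\iota^\sigma)}G,\widehat{(\beta,\iota^\sigma)})$ Morita equivalence, Lemma~\ref{lem-single} applies verbatim with $S=R_\varphi$ and yields the $\widehat{(\beta,\iota^\sigma)}-\widehat{\beta^\sigma}$ equivariant $*$-isomorphism $\Ad R_\varphi$. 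Thus the first assertion requires no further work.

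The remaining point is the commutativity of the diagram, and the key idea is to keep everything inside the matrix $*$-homomorphism
$$\Theta:=\left(\begin{smallmatrix}\phi^\sigma\rtimes G&\psi\rtimes G\\(\psi\rtimes G)^*&\phi\rtimes G\end{smallmatrix}\right)\colon \M\big(L(C_0(G_\sigma,\bar\iota)\rtimes_{\rt^\sigma}G)\big)\to \M\big(L(\E_\sigma(B)\rtimes_\gamma G)\big),$$
extended to multipliers by nondegeneracy. I would track the matrix positions carefully. In the source linking algebra the upper-left corner is $C_0(G)\rtimes_\rt G$, the lower-right corner is $C_0(G)\rtimes_{(\rt,\iota^\sigma)}G$, and $S_\varphi$ lives in the upper-right (off-diagonal) corner. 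Hence, for a generic $k\in C_0(G)\rtimes_{(\rt,\iota^\sigma)}G$ (lower-right), the product $\Ad S_\varphi(k)=S_\varphi k S_\varphi^*$ lands in the upper-left corner $C_0(G)\rtimes_\rt G$, consistently with Lemma~\ref{lem-single}.

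Applying $\Theta$ and using multiplicativity, I would factor
\begin{equation*}
\Theta(S_\varphi k S_\varphi^*)=\Theta(S_\varphi)\,\Theta(k)\,\Theta(S_\varphi)^*=R_\varphi\,(\phi\rtimes G)(k)\,R_\varphi^*,
\end{equation*}
since $\Theta(S_\varphi)=(\psi\rtimes G)(S_\varphi)=R_\varphi$ and $\Theta(k)=(\phi\rtimes G)(k)$. On the other hand, because $\Ad S_\varphi(k)$ sits in the upper-left corner, $\Theta$ acts on it through its $(1,1)$-entry $\phi^\sigma\rtimes G$, so the left-hand side also equals $(\phi^\sigma\rtimes G)\big(\Ad S_\varphi(k)\big)$. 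Comparing the two expressions gives $(\phi^\sigma\rtimes G)\circ\Ad S_\varphi=\Ad R_\varphi\circ(\phi\rtimes G)$, which is precisely the asserted commutativity. I expect the only delicate part to be bookkeeping rather than conceptual: one must ensure that the nondegenerate extension of $\Theta$ to the multiplier linking algebras respects the diagonal/off-diagonal decomposition, so that the products above are legitimate and genuinely land in the stated corners. Once the corners are correctly identified, the commutativity is a formal consequence of $\Theta$ being a $*$-homomorphism, and there is no genuine analytic obstacle.
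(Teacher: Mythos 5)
Your proposal is correct and follows essentially the same route as the paper: the first assertion is read off from Lemma~\ref{lem-single} together with the verifications preceding the statement, and the commutativity of the square comes from the computation $(\phi^\sigma\rtimes G)(S_\varphi k S_\varphi^*)=R_\varphi\,(\phi\rtimes G)(k)\,R_\varphi^*$. Your explicit use of the matrix homomorphism $\Theta$ on the linking algebras is just an unpacked form of the paper's appeal to the fact that $(\phi^\sigma\rtimes G,\psi\rtimes G,\phi\rtimes G)$ is an imprimitivity bimodule map, so the two arguments coincide in substance.
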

\begin{proof}
    Everything, except (maybe) the commutativity of the diagram,  follows directly from the discussion preceding the proposition. 
    But the commutativity of the diagram follows from the equation
    $R_\varphi=\psi\rtimes G(S_\varphi)$ and the fact that the triple
    $(\phi^\sigma\rtimes G,\psi\rtimes G,\phi\rtimes G)$ is an imprimitivity bimodule map. This leads to the computation
    \begin{align*}
        \phi^\sigma\rtimes G(S_\varphi  x S_\varphi^*)&=
        \psi\rtimes G(S_\varphi)(\phi\rtimes G(x))\psi\rtimes G(S_\varphi)^*\\
        &=R_\varphi(\phi\rtimes G(x))R_\varphi^*. 
    \end{align*}
    
\end{proof}

As a direct consequence of Proposition \ref{prop-deformation-iso} we can now finally conclude

\begin{theorem}\label{thm-deformation-iso}
    Let $(A,\delta)$ be a maximal coaction and let 
    $\sigma=(\T\into G_\sigma\onto G)$ be a twist for $G$. Then the maximal 
    deformation $(A^\sigma, \delta^\sigma)$ and the maximal 
    Fischer deformation $(D^\sigma, \epsilon^\sigma)$ are equivariantly isomorphic. 

    As a consequence (using Proposition \ref{prop-max-to-mu}) the same holds true for the $\mu$-deformations $(A_\mu^\sigma, \delta_\mu^\sigma)$ and 
    $(D_\mu^\sigma, \epsilon_\mu^\sigma)$  for any duality crossed-product functor $\rtimes_\mu$.
\end{theorem}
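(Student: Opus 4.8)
The plan is to realize both $(A^\sigma,\delta^\sigma)$ and $(D^\sigma,\epsilon^\sigma)$ as outputs of Fischer's construction (Proposition~\ref{prop-coact-Fischer}) and then to feed the isomorphism of Proposition~\ref{prop-deformation-iso} into the functoriality statement of Remark~\ref{rem-Fischer-functorial}. Concretely, with $(B,\beta,\phi)=(A\rtimes_\delta\hatG,\hatdelta,j_{C_0(G)})$, the Fischer deformation $(D^\sigma,\epsilon^\sigma)$ arises from the Fischer datum given by $E_D:=B\rtimes_{(\beta,\iota^\sigma)}G$, the inclusion $i_\K^D:=\Phi^\sigma=\phi\rtimes G\colon C_0(G)\rtimes_{(\rt,\iota^\sigma)}G\cong\K\to\M(E_D)$, and the coaction $\epsilon_D:=\Ad W_B\circ\widehat{(\beta,\iota^\sigma)}$ of Lemma~\ref{lem-onecocycle}, which is trivial on $i_\K^D(\K)$; here $D^\sigma=C(E_D,i_\K^D(\K))$ as in~\eqref{eq-Amax}. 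Likewise, applying the Fischer approach of \S\ref{subsec-Fischer} to the deformed weak $G\rtimes G$-algebra $(B^\sigma,\beta^\sigma,\phi^\sigma)$ presents the $\sigma$-deformation $(A^\sigma,\delta^\sigma)$ via the datum $E_A:=B^\sigma\rtimes_{\beta^\sigma}G$, the inclusion $i_\K^A:=\phi^\sigma\rtimes G\colon C_0(G)\rtimes_\rt G\cong\K\to\M(E_A)$, and the trivialized coaction $\tilde\beta^\sigma:=\Ad(W_{B^\sigma}^*)\circ\widehat{\beta^\sigma}$, with $A^\sigma=C(E_A,i_\K^A(\K))$.

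Next I would invoke Proposition~\ref{prop-deformation-iso}, which furnishes a $\widehat{(\beta,\iota^\sigma)}-\widehat{\beta^\sigma}$ equivariant $*$-isomorphism $\Ad R_\varphi\colon E_D\congto E_A$ whose defining commutative diagram shows that it intertwines $i_\K^D$ with $i_\K^A$ through the isomorphism $\Ad S_\varphi$ of Lemma~\ref{lem-equiv-iso}. In particular $\Ad R_\varphi$ carries $i_\K^D(\K)$ onto $i_\K^A(\K)$, so the requirement on the compacts in Remark~\ref{rem-Fischer-functorial} is satisfied.

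The crux is to upgrade the equivariance of $\Ad R_\varphi$ from the dual coactions to the two trivialized Fischer coactions $\epsilon_D$ and $\tilde\beta^\sigma$. As these differ from the respective dual coactions only by the cocycles $W_B$ and $W_{B^\sigma}$, it is enough to verify that $\Ad R_\varphi$ transports one to the other, that is $(\Ad R_\varphi\otimes\id_G)(W_B)=W_{B^\sigma}$. Using the standard identity $i_B\circ\phi=i_\K^D\circ i_{C_0(G)}$ (and its counterpart for $B^\sigma$), together with the fact that $(i_{C_0(G)}\otimes\id_G)(w_G)$ corresponds to $W^\sigma$, respectively $W$, under the isomorphisms $M^\sigma\rtimes\rho^\sigma$ and $M\rtimes\rho$, one rewrites $W_B=(i_\K^D\otimes\id_G)(W^\sigma)$ and $W_{B^\sigma}=(i_\K^A\otimes\id_G)(W)$. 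The commutative diagram of Proposition~\ref{prop-deformation-iso} then reduces the required identity to $(\Ad S_\varphi\otimes\id_G)(W^\sigma)=W$, which is exactly equation~\eqref{eq-W-imp} of Lemma~\ref{lem-equiv-iso}. I expect this to be the one step demanding genuine care, because of the several identifications at play and the bookkeeping of adjoints ($W$ versus $W^*$) in the two cocycle conventions; the rest is formal.

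Once the cocycles are matched, $\Ad R_\varphi$ is an isomorphism of Fischer data: it is equivariant for the trivialized coactions and sends compacts to compacts. Remark~\ref{rem-Fischer-functorial} then produces an equivariant isomorphism of the associated relative commutants, namely $(D^\sigma,\epsilon^\sigma)\cong(A^\sigma,\delta^\sigma)$, which settles the maximal case. Finally, the assertion for $\mu$-deformations follows by applying the $\mu$-ization of Proposition~\ref{prop-max-to-mu} to both maximal cosystems; since the passage to the $\mu$-quotient is canonical, the isomorphism descends to $(A_\mu^\sigma,\delta_\mu^\sigma)\cong(D_\mu^\sigma,\epsilon_\mu^\sigma)$ for every duality crossed-product functor $\rtimes_\mu$.
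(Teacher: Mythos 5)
Your proposal is correct and follows essentially the same route as the paper: the paper's proof of Theorem~\ref{thm-deformation-iso} is precisely ``apply Proposition~\ref{prop-deformation-iso} to $(B,\beta,\phi)=(A\rtimes_\delta \hatG, \hatdelta, j_{C_0(G)})$ and use the functoriality of Fischer's construction (Remark~\ref{rem-Fischer-functorial}),'' with the $\mu$-case then handled by Proposition~\ref{prop-max-to-mu}, exactly as you do. The step you isolate as the crux --- upgrading equivariance from the dual coactions to the trivialized Fischer coactions via $(\Ad R_\varphi\otimes\id_G)(W_B)=W_{B^\sigma}$, which the commutative diagram of Proposition~\ref{prop-deformation-iso} reduces to \eqref{eq-W-imp} --- is left implicit in the paper's one-line proof, so your explicit verification (including the attention to the $W$ versus $W^*$ conventions, which the paper itself uses inconsistently between \S\ref{subsec-Fischer} and Lemma~\ref{lem-onecocycle}) is a correct filling-in of that detail rather than a different argument.
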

\begin{proof}
    Just apply Proposition \ref{prop-deformation-iso} to the weak $G\rtimes G$-algebra $(B,\beta,\phi)=(A\rtimes_\delta \hatG, \hatdelta, j_{C_0(G)})$ and 
    use the functoriallity of Fischer's construction.
\end{proof}

One might wonder, whether the isomorphism between $B^\sigma\rtimes_{\beta^\sigma}G$ and $B\rtimes_{(\beta,\iota^\sigma)}G$ of Proposition \ref{prop-deformation-iso} has a more direct description. 
This is indeed the case if the function $\varphi:G_\sigma\to \T$ in the construction 
of the operators $L_\varphi, S_\varphi$ and $R_\varphi$, respectively, can be chosen to be continuous (which is equivalent to the existence of a continuous section $\mathfrak s \colon G\to G_\sigma$ for the quotient map). 
In this case the element $\varphi$ can be regarded as an element of $\M(C_0(G_\sigma,\bar\iota))= \Lb_{C_0(G)}(C_0(G), C_0(G_\sigma,\bar\iota))$ 
given by $f\mapsto \varphi\cdot f$. Its image in 
$\M(\E_\sigma(B))=\Lb_B(B, \E_\sigma(B))$ 
is given by $b\mapsto \varphi\otimes b$ (writing $b=\phi(f)b'$ for some $f\in C_0(G), b'\in B$, we see that 
$\varphi\otimes b=\varphi\cdot f\otimes b'\in C_0(G_\sigma,\bar\iota)\otimes_{C_0(G)}B=\E_\sigma(B)$). As a result 
we get an identification $B\cong \varphi\otimes B\cong \E_\sigma(B)$ as Hilbert $B$-module. For the action $\gamma:G_\sigma\car \E_\sigma(B)$ we compute
\begin{align*}
  \gamma_{\tig}(\varphi\otimes b)&=\rt^\sigma_{\tig}(\varphi)\otimes \beta_g(b)
  = \varphi\otimes \phi(\bar{\varphi}\cdot\rt^\sigma_{\tig})(\varphi)b\\
  &= \varphi\otimes \phi(u(\tig))\beta_g(b)
\end{align*}
with $u(\tig)=\bar\varphi\cdot\rt^\sigma_{\tig}(\varphi)\in C(G,\T)=U\M(C_0(G))$.
Thus, identifying $B$ with $\E_\sigma(B)$ as above, the action 
$\gamma$ is given by $\gamma_{\tig}(b)=\phi(u(\tig))\beta_g(b)$. It induces the 
action $\beta^\sigma=\Ad\gamma_g=\Ad \phi(u(\tig))\circ \beta_g$ on 
$B=\K_B(B)\cong\K_B(\E_\sigma(B))=B^\sigma$. 
Indeed, one can check that $\phi\circ u:G_\sigma \to U\M(B)$ is a $(\beta, \iota^\sigma)$ one-cocycle which induces an exterior equivalence between the twisted actions $(\beta,\iota^\sigma)$ and $(\beta^\sigma, 1_\T)$  of $(G_\sigma, \T)$ (see \cite{Echterhoff:Morita_twisted}*{p.~175} for a definition).
Therefore, we obtain the isomorphism 
$$\Phi:B\rtimes_{\beta^\sigma}G\congto B\rtimes_{(\beta,\iota^\sigma)}G$$
that extends the map
$$\Phi:C_c(G, B)\to C_c(G_\sigma, B,\iota^\sigma);\quad \Phi(f)(\tig)=f(g)\phi(u(\tig))^*. $$
This is indeed the isomorphism of Proposition \ref{prop-deformation-iso} in this case.

In general, if $\varphi$ can only be chosen to be  Borel, the isomorphism of Proposition \ref{prop-deformation-iso} can be interpreted as a substitute of a suitable multiplication of functions $f\in C_c(G_\sigma, B, \iota^\sigma)$ with 
the Borel-function 
$(\tig,t)\mapsto u(\tig)(t)=\overline{\varphi(\tilde{t})}\varphi(\tilde{t}\tig))$. It is difficult to give this a precise meaning in a direct way if $\varphi$ is not continuous.

\medskip
To compare the above with earlier constructions for continious cocycles, assume again that $\varphi:G_\sigma\to\T$ can be chosen continuous. It then induces a continuous section $\mathfrak{s}:G\to G_\sigma$ by
\begin{equation}\label{eq-tig}
    \mathfrak{s}(g)=\tig:\Leftrightarrow g=q(\tig)\;\text{and}\; \varphi(\tig)=1.
\end{equation} 
We  obtain the associated {\em continuous} cocycle $\om \in Z^2_{\mathrm{cont}}(G,\T)$ by $\om(g,t)=\partial \mathfrak{s}(g,t)=\mathfrak{s}(\tig)\mathfrak{s}(\tilde{t})\mathfrak{s}(\tig\tilde{t})^{-1}$.
Composing the isomorphism $\Phi$ above with the isomorphism
$$\Psi:B\rtimes_{(\beta,\iota^\sigma)}G\congto B\rtimes_{\beta,\om}G;\; f\mapsto f\circ \mathfrak{s} \quad(\text{for}\; f\in C_c(G_\sigma, B, \iota^\sigma) )$$
of Remark \ref{rem-BNS} we obtain the isomorphism 
\begin{equation}\label{eq-isom-om}
\Psi\circ \Phi: B\rtimes_{\beta^\sigma}G\congto B\rtimes_{\beta,\om} G; \;f\mapsto f\phi(u(\mathfrak{s}(g)))^*\quad\text{(for $f\in C_c(G,B)$)}.
\end{equation}  Using the equation 
$\varphi(\tilde{t})=\tilde{t}^{-1}\mathfrak{s}(t)\in \T$ for $\tilde{t}\in G_\sigma$, which follows from (\ref{eq-tig}), we then compute
\begin{align*}
    u(\mathfrak{s}(g))(t)&=
    \overline{\varphi(\tilde{t})}\varphi(\tilde{t}\mathfrak{s}(g))
    =\mathfrak{s}(t)^{-1}\tilde{t}\mathfrak{s}(g)^{-1}\tilde{t}^{-1}\mathfrak{s}(tg)\\
    &\stackrel{(*)}{=}\mathfrak{s}(g)^{-1}\mathfrak{s}(t)^{-1}\mathfrak{s}(tg)
    =\overline{\om(t, g)}
\end{align*}
where for equation $(*)$ we conjugated the central element $\mathfrak{s}(t)^{-1}\tilde{t}$ by $\mathfrak{s}(g)^{-1}$.
We thus recover the exterior equivalence between $(\beta,\iota^\om)$ and $\beta^\om$ as described in \cite{BE:deformation}*{Remark 3.4} and the isomorphism (\ref{eq-isom-om}) above is given on $C_c(G,B)$ by a suitable multiplication with the 
function $(g,t)\mapsto\overline{\om(t,g)}$.

\section{Outlook and future work}

We believe that the methods developed in this paper, particularly those grounded in Fischer’s framework for Landstad duality and maximalizations, are robust enough to extend beyond the setting of locally compact groups. In particular, they are well suited for generalization to regular locally compact quantum groups. Since the key structural ingredients, such as equivariant Hilbert modules, coactions trivial on compact operators, and the duality framework, are available in the quantum group setting (cf. Fischer \cite{Fischer-PhD}), we anticipate that analogous deformation constructions can be formulated for coactions of quantum groups, including twisted and exotic versions, thus extending constructions of Neshveyev and Tuset (\cite{NT}) in the reduced case. We plan to pursue a detailed treatment of this extension  in future work.

\begin{bibdiv}
  \begin{biblist}

\bib{BNS}{article}{
author={Bhowmick, Jyotishman},
author={Neshveyev, Sergey},
author={Sangha, Amandip},
title={Deformation of operator algebras by Borel cocycles},
JOURNAL = {J. Functional Analysis},
    VOLUME = {265},
      YEAR = {2013},
     PAGES = {983--1001},
     }

\bib{Buss-Echterhoff:Exotic_GFPA}{article}{
  author={Buss, Alcides},
  author={Echterhoff, Siegfried},
  title={Universal and exotic generalized fixed-point algebras for weakly proper actions and duality},
  status={eprint},
  note={\arxiv{1304.5697}},
  date={2013},
}

\bib{BE:deformation}{article}{
  author={Buss, Alcides},
  author={Echterhoff, Siegfried},
  title={A new approach to  deformation of C*-algebras via coactions},
  journal = {Studia Math.}, 
 status={to appear},
 % date={2023},
  doi={10.4064/sm240208-25-9}
}

\bib{BEW}{article}{
     author={Buss, Alcides},
  author={Echterhoff, Siegfried},
  Author = {Willett, Rufus},
     TITLE = {Exotic crossed products and the {B}aum-{C}onnes conjecture},
   JOURNAL = {J. Reine Angew. Math.},
 %  FJOURNAL = {Journal f\"{u}r die Reine und Angewandte Mathematik. [Crelle's Journal]},
    VOLUME = {740},
      YEAR = {2018},
     PAGES = {111--159},
      ISSN = {0075-4102},
   %MRCLASS = {46L55 (19K35 46L40)},
  % MRNUMBER = {3824785},
 % MRREVIEWER = {Jonathan M. Rosenberg},
       DOI = {10.1515/crelle-2015-0061},
       URL = {https://doi.org/10.1515/crelle-2015-0061},
}

\bib{BEW2}{article}{
     author={Buss, Alcides},
  author={Echterhoff, Siegfried},
  Author = {Willett, Rufus},
     TITLE = {Exotic crossed products},
 BOOKTITLE = {Operator algebras and applications---the {A}bel {S}ymposium
              2015},
    SERIES = {Abel Symp.},
    VOLUME = {12},
     PAGES = {67--114},
 PUBLISHER = {Springer, [Cham]},
      YEAR = {2017},
   %MRCLASS = {46L55 (37A55)},
  % MRNUMBER = {3837592},
}

\bib{CELY}{book}{
    AUTHOR = {Cuntz, Joachim},
    AUTHOR = {Echterhoff, Siegfried},
    AUTHOR = {Li, Xin}, 
    AUTHOR = { Yu, Guoliang},
     TITLE = {{$K$}-theory for group {$C^*$}-algebras and semigroup {$C^*$}-algebras},
    SERIES = {Oberwolfach Seminars},
    VOLUME = {47},
 PUBLISHER = {Birkh\"{a}user/Springer, Cham},
      YEAR = {2017},
     PAGES = {ix+319},
      ISBN = {978-3-319-59914-4; 978-3-319-59915-1},
   %MRCLASS = {19-02 (19Kxx 46Lxx)},
  % MRNUMBER = {3618901},
 % MRREVIEWER = {Efton Park},
}

\bib{Echterhoff:Morita_twisted}{article}{
  author={Echterhoff, Siegfried},
  title={Morita equivalent twisted actions and a new version of the Packer--Raeburn stabilization trick},
  journal={J. London Math. Soc. (2)},
  volume={50},
  date={1994},
  number={1},
  pages={170--186},
  issn={0024-6107},
  review={\MR{1277761}},
  doi={10.1112/jlms/50.1.170},
}%g

\bib{EKQ}{article}{
    AUTHOR = {Echterhoff, Siegfried},
    AUTHOR = {Kaliszewski, Steven},
    AUTHOR = {Quigg, John},
     TITLE = {Maximal coactions},
   JOURNAL = {Internat. J. Math.},
 %FJOURNAL = {International Journal of Mathematics},
    VOLUME = {15},
      YEAR = {2004},
    NUMBER = {1},
     PAGES = {47--61},
      ISSN = {0129-167X},
  % MRCLASS = {46L55},
 % MRNUMBER = {2039211},
%MRREVIEWER = {Valentin Deaconu},
       DOI = {10.1142/S0129167X04002107},
       URL = {https://doi.org/10.1142/S0129167X04002107},
}

\bib{EKQR}{article}{
    AUTHOR = {Echterhoff, Siegfried},
    AUTHOR = {Kaliszewski, Steven},
    AUTHOR = {Quigg, John},
    AUTHOR = {Raeburn, Iain},
     TITLE = {A categorical approach to imprimitivity theorems for
              {$C^*$}-dynamical systems},
   JOURNAL = {Mem. Amer. Math. Soc.},
%  FJOURNAL = {Memoirs of the American Mathematical Society},
    VOLUME = {180},
      YEAR = {2006},
    NUMBER = {850},
     PAGES = {viii+169},
      ISSN = {0065-9266},
 %  MRCLASS = {46L55 (18A99 46L05 46L08 46M15)},
 % MRNUMBER = {2203930},
%MRREVIEWER = {Judith A. Packer},
       DOI = {10.1090/memo/0850},
       URL = {https://doi.org/10.1090/memo/0850},
}

\bib{ER}{article}{
    AUTHOR = {Echterhoff, Siegfried},
    Author = {Raeburn, Iain},
     TITLE = {Multipliers of imprimitivity bimodules and {M}orita
              equivalence of crossed products},
   JOURNAL = {Math. Scand.},
 % FJOURNAL = {Mathematica Scandinavica},
    VOLUME = {76},
      YEAR = {1995},
    NUMBER = {2},
     PAGES = {289--309},
      ISSN = {0025-5521},
%   MRCLASS = {46L05 (46L55)},
%  MRNUMBER = {1354585},
       DOI = {10.7146/math.scand.a-12543},
       URL = {https://doi.org/10.7146/math.scand.a-12543},
}

\bib{Fischer-PhD}{thesis}{
AUTHOR = {Fischer, Robert},
Title = {Volle verschränkte Produkte für Quantengruppen und äquivariante $KK$-theorie},
Note = {Doctoral thesis, Münster 2003},
URL = {https://miami.uni-muenster.de/Record/16cff531-0319-4357-88d9-b9aba8d3d626/Details},
eprint={https://repositorium.uni-muenster.de/document/miami/16cff531-0319-4357-88d9-b9aba8d3d626/Dissertation.pdf},
}

\bib{Fischer-Max}{article}{
AUTHOR = {Fischer, Robert},
Title = {Maximal coactions of quantum groups},
Note = {Heft 350 der Preprintreihe des SFB 478--Geometrische Strukturen in der Mathematik, Münster, ISSN 1435--1188},
}

\bib{Green}{article}{
author = {Philip Green},
title = {{The local structure of twisted covariance algebras}},
volume = {140},
journal = {Acta Mathematica},
number = {none},
publisher = {Institut Mittag-Leffler},
pages = {191 -- 250},
year = {1978},
doi = {10.1007/BF02392308},
URL = {https://doi.org/10.1007/BF02392308}
}

\bib{KOQ1}{article}{
   author={Kaliszewski, S.},
   author={Omland, Tron},
   author={Quigg, John},
   title={Destabilization},
   journal={Expo. Math.},
   volume={34},
   date={2016},
   number={1},
   pages={62--81},
   issn={0723-0869},
   review={\MR{3463682}},
   doi={10.1016/j.exmath.2015.10.003},
}

\bib{KOQ}{article}{
    AUTHOR = {Kaliszewski, Steven},
    AUTHOR = {Omland, Tron},
    AUTHOR = {Quigg, John},
     TITLE = {Dualities for maximal coactions},
   JOURNAL = {J. Aust. Math. Soc.},
%  FJOURNAL = {Journal of the Australian Mathematical Society},
    VOLUME = {102},
      YEAR = {2017},
    NUMBER = {2},
     PAGES = {224--254},
      ISSN = {1446-7887},
%   MRCLASS = {46L55 (46M15)},
%  MRNUMBER = {3621640},
%MRREVIEWER = {Mohammad Sal Moslehian},
       DOI = {10.1017/S1446788716000215},
       URL = {https://doi.org/10.1017/S1446788716000215},
}

\bib{KLQ:R-coactions}{unpublished}{
    AUTHOR = {Kaliszewski, Steven},
    AUTHOR = {Landstad, Magnus},
    AUTHOR = {Quigg, John},
TITLE = {$R$-coactions on $C^*$-algebras},
Note = {Preprint: arXiv:2108.09231},
URL = {https://arxiv.org/abs/2108.09231},
}

\bib{Kasprzak:Rieffel}{article}{
author={Kasprzak, Pavel},
title={Rieffel deformation via crossed products},
journal={ J. Funct. Anal.},
volume={257},
number={5},
pages={1288--1332},
year={2009},
}

\bib{Kasprzak1}{article}{
    AUTHOR = {Kasprzak, Pawel},
     TITLE = {Rieffel deformation of group coactions},
   JOURNAL = {Comm. Math. Phys.},
 % FJOURNAL = {Communications in Mathematical Physics},
    VOLUME = {300},
      YEAR = {2010},
    NUMBER = {3},
     PAGES = {741--763},
      ISSN = {0010-3616},
 %  MRCLASS = {46L65 (22D25)},
 % MRNUMBER = {2736961},
%MRREVIEWER = {Toshihiko Masuda},
       DOI = {10.1007/s00220-010-1093-9},
       URL = {https://doi.org/10.1007/s00220-010-1093-9},
}

%\bib{Kustermans-Vaes:Weight}{article}{
%  author={Kustermans, Johan},
%  author={Vaes, Stefaan},
%  title={Weight theory for \(C^*\)\nobreakdash-algebraic quantum groups},
%  status={eprint},
%  note={\arxiv{math/9901063}},
%  date={1999},
%}

\begin{comment}
 \bib{Landstad:Duality}{article}{
    AUTHOR = {Landstad, Magnus B.},
     TITLE = {Duality theory for covariant systems},
   JOURNAL = {Trans. Amer. Math. Soc.},
 % FJOURNAL = {Transactions of the American Mathematical Society},
    VOLUME = {248},
      YEAR = {1979},
    NUMBER = {2},
     PAGES = {223--267},
      ISSN = {0002-9947},
 %  MRCLASS = {46L55},
 % MRNUMBER = {522262},
%MRREVIEWER = {Alain Connes},
       DOI = {10.2307/1998969},
       URL = {https://doi.org/10.2307/1998969},
}
\end{comment}

\bib{Nilsen:Duality}{article}{
    AUTHOR = {Nilsen, May},
     TITLE = {Duality for full crossed products of {$C^\ast$}-algebras by
              non-amenable groups},
   JOURNAL = {Proc. Amer. Math. Soc.},
 % FJOURNAL = {Proceedings of the American Mathematical Society},
    VOLUME = {126},
      YEAR = {1998},
    NUMBER = {10},
     PAGES = {2969--2978},
      ISSN = {0002-9939},
  % MRCLASS = {46L55},
 % MRNUMBER = {1469427},
%MRREVIEWER = {John Quigg},
       DOI = {10.1090/S0002-9939-98-04598-5},
       URL = {https://doi.org/10.1090/S0002-9939-98-04598-5},
}

\bib{NT}{article}{
    AUTHOR = {Neshveyev, Sergey},
    AUTHor = {Tuset, Lars},
     TITLE = {Deformation of {$\rm C^\ast$}-algebras by cocycles on locally
              compact quantum groups},
   JOURNAL = {Adv. Math.},
 % FJOURNAL = {Advances in Mathematics},
    VOLUME = {254},
      YEAR = {2014},
     PAGES = {454--496},
      ISSN = {0001-8708},
  % MRCLASS = {46L65 (20G42 46L89 81R50)},
 % MRNUMBER = {3161105},
%MRREVIEWER = {Kenny De Commer},
       DOI = {10.1016/j.aim.2013.12.025},
       URL = {https://doi.org/10.1016/j.aim.2013.12.025},
}

\begin{comment}
    
\bib{Pack-Rae}{article}{
    AUTHOR = {Packer, Judith A.},
    author={Raeburn, Iain},
     TITLE = {Twisted crossed products of {$C^*$}-algebras},
   JOURNAL = {Math. Proc. Cambridge Philos. Soc.},
 % FJOURNAL = {Mathematical Proceedings of the Cambridge Philosophical Society},
    VOLUME = {106},
      YEAR = {1989},
    NUMBER = {2},
     PAGES = {293--311},
      ISSN = {0305-0041},
     % CODEN = {MPCPCO},
 %  MRCLASS = {46L55 (22D25 46L40)},
 % MRNUMBER = {1002543 (90g:46097)},
%MRREVIEWER = {John Quigg},
       DOI = {10.1017/S0305004100078129},
       URL = {http://dx.doi.org/10.1017/S0305004100078129},
}
\end{comment}

\bib{Quigg-full}{article}{
    AUTHOR = {Quigg, John C.},
     TITLE = {Full {$C^*$}-crossed product duality},
   JOURNAL = {J. Austral. Math. Soc. Ser. A},
%  FJOURNAL = {Australian Mathematical Society. Journal. Series A. Pure Mathematics and Statistics},
    VOLUME = {50},
      YEAR = {1991},
    NUMBER = {1},
     PAGES = {34--52},
      ISSN = {0263-6115},
 %  MRCLASS = {46L55},
 % MRNUMBER = {1094057},
%MRREVIEWER = {Judith A. Packer},
}

\bib{Quigg:Landstad}{article}{
    AUTHOR = {Quigg, John C.},
     TITLE = {Landstad duality for {$C^*$}-coactions},
   JOURNAL = {Math. Scand.},
%  FJOURNAL = {Mathematica Scandinavica},
    VOLUME = {71},
      YEAR = {1992},
    NUMBER = {2},
     PAGES = {277--294},
      ISSN = {0025-5521},
     % CODEN = {MTSCAN},
%   MRCLASS = {46L55 (22D25)},
%  MRNUMBER = {1212711 (94e:46119)},
%MRREVIEWER = {Robert J. Archbold},
}

\bib{Rieffel-Heisenberg}{article}{
    AUTHOR = {Rieffel, Marc A.},
     TITLE = {On the uniqueness of the {H}eisenberg commutation relations},
   JOURNAL = {Duke Math. J.},
 %  FJOURNAL = {Duke Mathematical Journal},
    VOLUME = {39},
      YEAR = {1972},
     PAGES = {745--752},
      ISSN = {0012-7094},
  % MRCLASS = {22D10 (81.46)},
  % MRNUMBER = {412340},
 % MRREVIEWER = {Johan G. Belinfante},
       URL = {http://projecteuclid.org/euclid.dmj/1077380581},
}

\bib{Rieffel:Deformation}{article}{
    AUTHOR = {Rieffel, Marc A.},
     TITLE = {Deformation quantization for actions of {${\bf R}^d$}},
   JOURNAL = {Mem. Amer. Math. Soc.},
 %  FJOURNAL = {Memoirs of the American Mathematical Society},
    VOLUME = {106},
      YEAR = {1993},
    NUMBER = {506},
     PAGES = {x+93},
      ISSN = {0065-9266},
     % CODEN = {MAMCAU},
   % MRCLASS = {46L87 (58B30 81R50 81S10)},
  % MRNUMBER = {1184061 (94d:46072)},
 % MRREVIEWER = {Arne Schirrmacher},
       DOI = {10.1090/memo/0506},
       URL = {http://dx.doi.org/10.1090/memo/0506},
}

\bib{Sangha:thesis}{article}{
  author={Sangha, Amandip S.},
  title={Cocycle deformations of operator algebras and noncommutative geometry},
  institution={University of Oslo},
  type={phdthesis},
  date={2014},
  eprint={http://heim.ifi.uio.no/amandips/phd-thesis.pdf},
}

\begin{comment}
    
\bib{Vaes-Vainerman:Extension_of_lcqg}{article}{
  author={Vaes, Stefaan},
  author={Vainerman, Leonid},
  title={Extensions of locally compact quantum groups and the bicrossed product construction},
  journal={Adv. Math.},
  volume={175},
  date={2003},
  number={1},
  pages={1--101},
  issn={0001-8708},
  review={\MRref{1970242}{2004i:46103}},
  doi={10.1016/S0001-8708(02)00040-3},
}

\bib{Dana:book}{book}{
    AUTHOR = {Williams, Dana P.},
     TITLE = {Crossed products of {$C{^\ast}$}-algebras},
    SERIES = {Mathematical Surveys and Monographs},
    VOLUME = {134},
 PUBLISHER = {American Mathematical Society, Providence, RI},
      YEAR = {2007},
     PAGES = {xvi+528},
      ISBN = {978-0-8218-4242-3; 0-8218-4242-0},
   %MRCLASS = {46-02 (22D25 46L05 46L35 46L55 46L85)},
  % MRNUMBER = {2288954},
 % MRREVIEWER = {Jonathan M. Rosenberg},
       DOI = {10.1090/surv/134},
       URL = {https://doi.org/10.1090/surv/134},
}
\end{comment}

  \end{biblist}
\end{bibdiv}

\end{document}